\let\wfs@comment@comment\comment
\let\comment\@undefined
\let\wfs@changes@comment\comment
\let\comment\@undefined
\newcommand\comment{%
    \ifthenelse{\equal{\@currenvir}{comment}}
    {\wfs@comment@comment}
    {\wfs@changes@comment}%
}
\theoremstyle{definition}
\newtheorem{theorem}{Theorem}[section]
\newtheorem{definition}[theorem]{{{Definition}}}
\newtheorem{example}[theorem]{{{Example}}}
\newtheorem{remark}[theorem]{{{Remark}}}
\newtheorem{corollary}[theorem]{{{Corollary}}}
\newtheorem{proposition}[theorem]{{{Proposition}}}
\newtheorem{lemma}[theorem]{{{Lemma}}}
\newcommand{\numberset}{\mathbb}
\newcommand{\F}{\numberset{F}}
\newcommand{\CC}{\mathcal{C}}
\newcommand{\wt}{\textnormal{wt}}
\newcommand{\rank}{\textnormal{rank}}
\DeclareMathOperator{\supp}{supp}
\title{\textbf{On ideals in group algebras: \\an uncertainty principle and the Schur product}}
\author[1]{Martino Borello}
\affil[1]{Universit\'e Paris 8, Laboratoire de G\'eom\'etrie, Analyse et Applications, LAGA,
Universit\'e Sorbonne Paris Nord, CNRS, UMR 7539, France}
\author[2]{Wolfgang Willems}
\affil[2]{Otto-von-Guericke Universit\"{a}t, Magdeburg, Germany and
Universidad del Norte, Barranquilla, Colombia}
\author[3]{Giovanni Zini}
\affil[3]{Universit\`a degli Studi di Modena e Reggio Emilia, Modena, Italy}
\date{}
\begin{document}

\maketitle

\begin{abstract}
In this paper we investigate some properties of ideals in group algebras of finite groups over fields. First, we highlight an important link between their dimension, their minimal Hamming distance and the group order. This is a generalized version of an uncertainty principle shown in 1992 by Meshulam. Secondly, we introduce the notion of the Schur product of ideals in group algebras and investigate the module structure and the dimension of the Schur square.
We give a structural result on ideals that coincide with their Schur square, and we provide conditions for an ideal to be such that its Schur square has the projective cover of the trivial module as a direct summand. This has particularly interesting consequences for group algebras of $p$-groups over fields of characteristic $p$.
\end{abstract}

\textbf{Keywords:} group algebra; Hamming distance; uncertainty principle; Schur product.

\textbf{MSC2020 classification:} 20C05, 94B60.

\section*{Introduction}

Studying the algebraic structure of group algebras $KG$ in positive characteristic $p$ (which reflects many $p$-local properties of the underlying group $G$) means to a large extent studying its ideals. In this pure representation theoretical context the Hamming metric, which is naturally given on $KG$ and has a coding theoretical meaning, is not often considered. 
In this paper we consider ideals in group algebras $KG$ of finite groups $G$ over fields $K$, endowed with the Hamming metric of $KG$. Such ideals are classically named group codes, and more specifically $G$-codes; see \cite{MR0253797}.
From a coding theoretical point of view, it is meaningful to look for $G$-codes $\CC$ with a large $K$-dimension $\dim\CC$ and a large minimum (Hamming) distance $d(\CC)$.
Several remarkable codes can be detected as ideals in group algebras; for instance, this holds for the extended binary Golay code
 \cite{bernhardt1990extended} - which is related to the Leech lattice, to the sporadic simple group ${\rm M}_{24}$ and to various design-theoretic objects - and for binary Reed-Muller codes \cite{MR270816} - which have strong connections to geometry.
 Moreover, $G$-codes over $K$ have been proved to be asymptotically good for any finite field $K$, see \cite{bazzi2006some,MR4145573}: there exist infinitely many groups $G$ (of growing order) and ideals $\CC\leq KG$ with both large dimension and large minimum distance (where ``large'' means linear in the order of $G$).
 The algebraic structure of $G$-codes has been intensively studied; see e.g. \cite{bernal2009intrinsical,borello2021checkable,borello2019algebraic,garcia2020dimension} and the references therein. Yet, there are still many open questions about their coding theoretical properties. 
After recalling some notations and preliminary results in Section \ref{sec:prelim}, the aim of this paper is twofold: Section \ref{sec:UP} deals with a bound on the coding-theoretical parameters of a $G$-code, while Section \ref{sec:Schur} investigate the structure of $G$-codes in relation to their Schur product. We now give some more details on Sections \ref{sec:UP} and \ref{sec:Schur}.

In Section \ref{sec:UP}, we generalize in Theorem \ref{thm:UP} an uncertainty principle proved by Meshulam \cite{meshulam1992uncertainty} to $K$-valued functions over $G$ for any field $K$ and finite group $G$, and we put it in the context of coding theory.
Uncertainty principles for functions $f$ over abelian groups are classical harmonic analytic results assuring that either $f$ or its Fourier transform $\hat{f}$ has large support; see \cite{folland1997uncertainty,tao2005uncertainty}. Recentely, Evra, Kowalski and Lubotzky \cite{MR3852174} have started to build a bridge between some uncertainty principles and the goodness of cyclic codes, which are indeed ideals in the group algebra over a cyclic group. The paper \cite{MR4324212} pushed forward with this link in relation to MDS codes and the BCH bound, while Feng, Hollmann and Xiang \cite{feng2019shift} extended the investigation to abelian groups. As a consequence of an uncertainty principle, we prove in Corollary \ref{coro:bound}  the bound
\begin{equation}\label{eq:bound}d(\CC)\cdot \dim \CC\geq |G|\end{equation}
 for any $G$-code $\CC$. Up to our knowledge, this is the first bound of this shape on the parameters of a general $G$-code. Note that, for certain families of linear codes in $K^n$, interesting results on the product of the minimum distance and the dimension have been recently obtained in \cite{alizadeh2021sequences}.
The rest of Section \ref{sec:UP} thoroughly describes  the structure of $G$-codes attaining equality in \eqref{eq:bound}.

In Section \ref{sec:Schur}, we define the Schur product in a group algebra $KG$ componentwise, in analogy with the Schur - or Hadamard - product of matrices, which is object of the celebrated Schur product theorem on positive definite matrices \cite{Schur}.
We then define the Schur product of $G$-codes $\CC$ as the $K$-linear subspace spanned by the Schur product of their elements, and investigate it as a $G$-code itself.
In particular we focus on the dimension of the Schur square $\CC*\CC$, boosted by two main reasons.
At first, as highlighted in \cite{MR3364442}, the dimension of $\CC*\CC$ is related to the Hilbert sequence and the Castelnuovo-Mumford regularity of $\CC$, which are defined via the classical correspondence between $k$-dimensional linear codes over $K$ and point multisets in the $(k-1)$-dimensional projective space over $K$. Secondly, as noticed in \cite{couvreur2014distinguisher}, the dimension of the Schur square of structured codes $\CC$ can be quite small, in contrast to  random codes, and this has relevant consequences for the security of code-based cryptosystems related to $\CC$.
Section~\ref{sec:Schur} considers the module structure of the Schur square $\CC*\CC$ of $G$-codes $\CC$. Theorem \ref{thm:induced} is a structure result in the case $\CC*\CC=\CC$. Then we show in Theorem \ref{thm:Schur} that the projective cover of the trivial module is a direct summand of $\CC*\CC$ whenever $\CC$ is not self-orthogonal. Finally, we explore some consequences of Theorem \ref{thm:Schur}, with a particular attention on the case of $p$-groups in characteristic $p$, when non-self-orthogonal $G$-codes have Schur square of maximum dimension.

\section{Preliminary results}\label{sec:prelim}

Throughout the paper, the following notations are used:
\begin{itemize}
    \item $K$ is a field, and in particular $\F_q$ is the finite field of order $q$.
    \item $n$ is a positive integer, $K^n$ is the $n$-dimensional coordinate $K$-vector space, and we write $V\leq K^n$ for $K$-linear subspaces $V$ of $K^n$.
    \item $G$ is a finite group, $p$ is a prime number, $|G|_p\geq1$ is the largest power of $p$ dividing $|G|$, and we write $H\leq G$ for subgroups $H$ of $G$.
    \item $KG=\left\{\sum_{g\in G}a_g g\mid a_g\in K\right\}$ is the group algebra of $G$ over $K$, and we write $\CC\leq KG$ for \emph{right} ideals $\CC$ of $KG$.
    \item Functions from $G$ to $K$ are identified with elements of $KG$ via the correspondence between $f:G\to K$ and $\sum_{g\in G}f(g)g$.
    \item $K_G$ is the trivial $KG$-module.
    \item If $H\leq G$ and $M$ is a $KH$-module, then $M^G$ is the \emph{induced} $KG$-module, which is defined up to isomorphism as $M^G=\bigoplus_{i=1}^s Mg_i$, where $\{g_1,\ldots,g_s\}\subseteq G$ is a right transversal of $H$ in $G$.
    \item When considering trivial $KH$-modules with $H\leq G$, or $KG$-modules induced by $KH$-modules, or projective covers of $KH$-modules, we always identify them with the corresponding isomorphic right ideals of $KG$.
\end{itemize}

For other classical results on group algebras and its ideals, we refer the reader to \cite{HB} or
 \cite[Chapter 16]{huffman2021concise}.

\subsection{Linear codes in $K^n$}

We recall some basic notions on linear codes in $K^n$, i.e. on linear subspaces endowed with the Hamming metric; see \cite[Chapter 1]{huffman2021concise}.

The \emph{support} and the \emph{weight} of an element $v=(v_1,\ldots,v_n)\in K^n$ are defined  as
\[
\supp(v):=\left\{ i\in\{1,\ldots,n\}\mid v_i\ne0 \right\}\subseteq\{1,\ldots,n\}\  \text{and} \  \wt(v):=|\supp(v)|\in\{0,\ldots,n\}
\]  respectively,
and the \emph{Hamming distance} on $K^n$  as
\[
d(v,v'):=|\supp(v-v')|,\quad\mbox{for any }v,v'\in K^n.
\]
A \emph{linear code} $\CC$ of lenght $n$ over the alphabet $K$ is a $K$-linear subspace of $K^n$, endowed with the Hamming metric, and the elements of $\CC$ are called \emph{codewords}.
The \emph{minimum distance} $d(\CC)$ of $\CC$ is defined as the minimum Hamming distance between two distinct codewords, and coincides with the minimum weight of a nonzero codeword:
\[
d(\CC):=\min\{d(c,c')\mid c,c'\in\CC,c\ne c'\}=\min\{\wt(c)\mid c\in\CC,c\ne0\}.
\]
If $d=d(\CC)$ and $k=\dim(\CC)$, we denote the parameters of $\CC$ by $[n,k,d]$. 
A \emph{generator matrix} of $\CC$ is a $k\times n$ matrix $M$ over $K$ whose rows form a basis of $\CC$.
An \emph{information set} for $\CC$ is a $k$-subset $S$ of $\{1,\ldots,n\}$ such that the columns of $M$ indexed by $S$ are linearly independent (note that the property of being an information set does not depend on $M$).

An $[n,k,d]$-linear code $\CC'$ over $K$ is \emph{permutation equivalent} to $\CC$ if there exists an $n\times n$ permutation $n\times n$ matrix $P$ such that $M\cdot P$ is a generator matrix of $\CC'$; equivalently, if there exists a permutation $\sigma$ in the symmetric group $S_n$ such that \[\CC'=\sigma(\CC):=\{(c_{\sigma(1)},\ldots,c_{\sigma(n)})\in K^n\mid (c_1,\ldots,c_n)\in\CC\}.\]
Notice that permutation equivalence preserves the parameters of a code.
The \emph{permutation automorphism group} ${\rm PAut}(\CC)$ of $\CC$ is the stabilizer of $\CC$ in the action of $S_n$ by permutation equivalence, i.e. ${\rm PAut}(\CC)=\{\sigma\in S_n\mid \sigma(\CC)=\CC\}$.

With respect to the standard inner product $\langle(v_1,\ldots,v_n),(v_1',\ldots,v_n')\rangle:=\sum_{i=1}^n v_iv_i'$ on $K^n$ we consider the \emph{dual code} $\CC^{\perp}=\{v\in K^n\mid\langle v,c\rangle=0\mbox{ for all }c\in\CC\}$ of $\CC$.
The code $\CC$ is called \emph{self-orthogonal} if $\CC\subseteq\CC^{\perp}$ and \emph{self-dual} if $\CC=\CC^{\perp}$.

\begin{definition}
The \emph{Schur product} in $K^n$ is the bilinear map $K^n\times K^n\to K^n$ defined by
\[
(a_1,\ldots,a_n) * (b_1,\ldots,b_n) = (a_1b_1,\ldots, a_nb_n)\in K^n,\qquad\mbox{for any }a_i,b_i\in K,
\]
and the Schur product of two linear codes $\CC,\CC^{\prime}\leq K^n$ is defined as the linear code
\[
\CC * \CC'  = \langle c * c' \mid c \in \CC, c' \in \CC' \rangle_K  \leq K^n.
\]
\end{definition}

Since $\CC*\CC'=\langle c*c'\mid c\in B,c'\in B' \rangle_K$ whenever $\CC=\langle B\rangle_K$ and $\CC'=\langle B'\rangle_K$, we have
\[
 \dim \CC * \CC'   \leq  \min\left\{ n,\; \dim\CC\cdot\dim\CC' - \binom{\dim\CC \cap \CC^{\prime}}{2} \right\},
\]
and in particular
\begin{equation}\label{eq:BoundDim}
\dim \CC * \CC   \leq  \min\left\{ n, \binom{\dim\CC +1}{2} \right\}.
\end{equation}
By \cite[Proposition 5.2]{MCP}, if $\CC$ is chosen at random and $n>\binom{\dim\CC+1}{2}$, then $\dim \CC * \CC =\binom{\dim\CC+1}{2}$ almost surely. This is not the case for other algebraically structured codes, such as Reed-Solomon codes; see \cite[Section 5]{W}.

\subsection{$G$-codes in $KG$}

In analogy to $K^n$, we endow also the group algebra $KG$ with the Hamming metric as follows. The \emph{support} and the \emph{weight} of an element $f=\sum_{g\in G}a_g g\in KG$ ($a_g\in K$) are defined  as $\supp(f):=\{g\in G\mid a_g\ne0\}\subseteq G$ and $\wt(f):=|\supp(f)|\in\{0,\ldots,|G|\}$ respectively.
The \emph{Hamming distance} between two elements $f,f'\in KG$ is $d(f,f'):=|\supp(f-f')|$.

\begin{definition} \cite[Chapter 16]{huffman2021concise}
A \emph{$G$-code} $\CC$ over the alphabet $K$ is a \emph{right} ideal of $KG$, endowed with the Hamming metric. The \emph{length} of $\CC$ is $|G|$, the \emph{dimension} of $\CC$ is its dimension as a $K$-linear subspace of $KG$, and its \emph{minimum distance} is
\[
d(\CC):=\min\{d(c,c')\mid c,c'\in\CC,c\ne c'\}=\min\{\wt(c)\mid c\in\CC,c\ne0\}.
\]
\end{definition}

We remark here that the choice of \emph{right} ideals as $G$-codes is a convention that does not affect the validity of the correspondent results for left ideals.

By choosing an ordering on $G$, say $G=\{g_1,\ldots,g_n\}$ with $n=|G|$, we may define a \emph{standard} $K$-linear isomorphism $\varphi:KG\to K^n$ by $\sum_{i=1}^na_i g_i\mapsto (a_1,\ldots,a_n)$.
We stress that such a standard isomorphism is unique up to the choice of the ordering.
The following characterization holds, and allows us to identify those linear codes in $K^n$ that correspond to $G$-codes.
\begin{proposition}[\!\!\cite{bernal2009intrinsical}]
A $K$-linear subspace $\CC$ of $KG$ is a $G$-code if and only if the permutation automorphism group ${\rm PAut}(\varphi(\CC))\leq S_n$ of the linear code $\varphi(\CC)\leq K^n$ contains a subgroup isomorphic to $G$ acting regularly in its induced natural action on the $n$ positions.
\end{proposition}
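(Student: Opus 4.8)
The plan is to pass, via the fixed standard isomorphism $\varphi$, between the algebraic property ``$\CC$ is a right ideal of $KG$'' and the permutation‑theoretic property ``$\varphi(\CC)$ is invariant under a regular subgroup of $S_n$ isomorphic to $G$''. The starting point is that right multiplication by a fixed $g\in G$ permutes the basis $\{g_1,\dots,g_n\}$ of $KG$: there is a permutation $\pi_g\in S_n$ with $g_i g=g_{\pi_g(i)}$ for all $i$, the map $g\mapsto\pi_g$ is injective, and its image $R:=\{\pi_g\mid g\in G\}$ is a subgroup of $S_n$ isomorphic to $G$ which, read through $\varphi$, is precisely the group of operators on $K^n$ induced by the right multiplications by elements of $G$. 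Moreover $R$ acts regularly on $\{1,\dots,n\}$, since $G$ acts regularly on itself by right translation: the action is transitive and all point stabilizers are trivial. The ``only if'' direction is then a direct translation: if $\CC$ is a right ideal then $\CC g\subseteq\CC$ for every $g\in G$, and comparing dimensions forces $\CC g=\CC$; applying $\varphi$ shows that $\varphi(\CC)$ is stabilized by every element of $R$, so $R\leq{\rm PAut}(\varphi(\CC))$ is the required regular subgroup isomorphic to $G$.

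For the ``if'' direction, suppose that $H\leq{\rm PAut}(\varphi(\CC))$ is isomorphic to $G$ and acts regularly on the $n$ positions. The fact I would invoke here is that any two regular permutation groups of degree $n=|G|$ that are abstractly isomorphic to $G$ are conjugate in $S_n$: fixing a base point and using the orbit map turns each of them into a copy of the (left, equivalently right) regular representation of $G$, and any two copies of this representation are permutation equivalent. Applied to $H$ and to the subgroup $R$ of the previous paragraph, this gives $\tau\in S_n$ with $\tau H\tau^{-1}=R$. Then $\tau$ carries $\varphi(\CC)$ to a subspace of $K^n$ invariant under $R$, hence, by the first paragraph, to $\varphi(\mathcal{J})$ for some right ideal $\mathcal{J}\leq KG$; so $\varphi(\CC)$ is permutation equivalent to the image of a right ideal of $KG$, that is, $\CC$ is a $G$-code (up to the choice of ordering used to define $\varphi$, which by the remark following the definition of $\varphi$ is exactly the flexibility inherent in the notion).

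The step I expect to be the main obstacle is the conjugacy input to the ``if'' direction together with the bookkeeping around it: one must pin down a base point and an isomorphism $H\cong G$ at the same time, keep track of whether the resulting identification sends $H$-invariant subspaces to right ideals or to left ideals (the two are swapped by the basis‑permuting anti‑automorphism $\sum_{g\in G}a_g g\mapsto\sum_{g\in G}a_g g^{-1}$, which costs nothing), and check that replacing $\varphi(\CC)$ by $\tau(\varphi(\CC))$ is harmless because the property of being a $G$-code does not depend on the ordering chosen for $G$. The ``only if'' direction is routine once $g\mapsto\pi_g$ has been written down; the only point of care there is reconciling the convention for ${\rm PAut}$, whose elements act on coordinates by $\sigma\cdot(c_1,\dots,c_n)=(c_{\sigma(1)},\dots,c_{\sigma(n)})$, with the action of $R$, which at worst replaces each $\pi_g$ by $\pi_g^{-1}$ and therefore leaves the subgroup $R$ unchanged.
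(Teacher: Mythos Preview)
The paper does not supply its own proof of this proposition: it is quoted verbatim from \cite{bernal2009intrinsical} and stated without argument, so there is no in-paper proof to compare your attempt against.

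Your argument is sound and matches the standard proof of this characterization. The ``only if'' direction is exactly the observation that right multiplication by $G$ yields a regular copy $R\cong G$ inside ${\rm PAut}(\varphi(\CC))$. For the ``if'' direction you correctly invoke that any two regular permutation representations of $G$ on $|G|$ points are conjugate in $S_{|G|}$, and you are right to flag that the conclusion is really that $\varphi(\CC)$ is \emph{permutation equivalent} to the image of a right ideal, i.e.\ that $\CC$ is a $G$-code after a suitable relabelling of $G$. This is indeed the intended content of the proposition (as the sentence preceding it in the paper, ``allows us to identify those linear codes in $K^n$ that correspond to $G$-codes'', makes clear), and it is how the result is stated and proved in \cite{bernal2009intrinsical}. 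Your remarks about left/right conventions and the possible replacement of $\pi_g$ by $\pi_g^{-1}$ are accurate and cost nothing, as you say.
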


Via a standard isomorphism $\varphi$, we may (uniquely) define the standard inner product and the Schur product on $KG$ through the corresponding products on $K^n$, as
\[\langle f,f'\rangle:=\langle \varphi(f),\varphi(f')\rangle \quad  \text{ and } \quad  f * f'=\varphi(f) * \varphi(f'),\quad\mbox{ for any }f,f'\in KG.\]
Therefore, we can also define the Schur product of $G$-codes $\CC,\CC'\leq KG$ as the following $K$-linear subspace of $KG$:
\begin{equation}\label{eq:schurgroupcodes}
    \CC*\CC:=\langle c*c'\mid c\in \CC,c'\in\CC' \rangle_K.
\end{equation}

\section{An uncertainty principle for $G$-codes}\label{sec:UP}

In this section, we give an uncertainty principle for field-valued functions over $G$ and the corresponding bound on the parameters of a $G$-code, also investigating equality in this bound. \\

Following \cite[Section V]{feng2019shift}, we first introduce the following definition for any finite group $G$.

\begin{definition}
Let $S$ be a nonempty subset of $G$. A sequence  $g_1,\ldots,g_t$ in $G$ has  \emph{right $S$-rank} $t$ if $S g_i=\{s g_i\mid s\in S\}$ is not contained in $\bigcup_{j<i}Sg_i$ for all $i\in\{2,\ldots,t\}$. 
\end{definition}

For any $f\in KG$, we denote by $T_f:KG\to KG$ the map $v\mapsto fv$.

\begin{lemma}\label{lemma:dim}
Let $0\neq f\in KG$ and $S=\supp(f)$. If there exists a sequence in $G$ with right $S$-rank $t$, then $\dim fKG=\rank_K(T_f)\geq t$.
\end{lemma}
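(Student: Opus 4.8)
The statement relates a combinatorial quantity (right $S$-rank) to the rank of the left-multiplication operator $T_f$. The natural approach is to show that a sequence $g_1,\ldots,g_t$ with right $S$-rank $t$ forces the images $fg_1,\ldots,fg_t \in fKG$ to be $K$-linearly independent, which immediately gives $\dim fKG \geq t$.

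**Key steps.** First I would unwind the definitions: $f = \sum_{g\in G} a_g g$ with $a_g \neq 0$ exactly for $g \in S = \supp(f)$, so for each $i$ the element $fg_i = \sum_{s\in S} a_s (sg_i)$ has support exactly $Sg_i$. The right $S$-rank condition says that for each $i \geq 2$, the set $Sg_i$ contains at least one group element not lying in $Sg_1 \cup \cdots \cup Sg_{i-1}$ (and of course $Sg_1 \neq \emptyset$). I would then argue linear independence of $fg_1,\ldots,fg_t$ by a "staircase"/triangular argument: suppose $\sum_{i=1}^t \lambda_i fg_i = 0$ with not all $\lambda_i$ zero, and let $k$ be the largest index with $\lambda_k \neq 0$. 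Pick $h \in Sg_k \setminus \bigcup_{j<k} Sg_j$, which exists by the rank hypothesis (for $k\geq 2$; for $k=1$ just pick any $h\in Sg_1$, using $\lambda_i=0$ for $i>1$). The coefficient of $h$ in $\sum_i \lambda_i fg_i$ receives no contribution from $fg_j$ for $j < k$ (since $h \notin Sg_j$) and none from $fg_j$ for $j > k$ (since $\lambda_j = 0$), so it equals $\lambda_k \cdot a_s$ where $s \in S$ is the unique element with $sg_k = h$, i.e. $s = hg_k^{-1}$. Since $a_s \neq 0$ and $\lambda_k \neq 0$, this coefficient is nonzero, contradicting $\sum_i \lambda_i fg_i = 0$. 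Hence $fg_1,\ldots,fg_t$ are linearly independent elements of $fKG = \rs(T_f)$, so $\dim fKG = \rank_K(T_f) \geq t$.

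**Main obstacle.** There is no deep obstacle here; the only point requiring care is the bookkeeping in the triangular argument — specifically, verifying that the "new" group element $h$ chosen for index $k$ really fails to appear in the support of every $fg_j$ with $j<k$ (this is precisely $h \notin Sg_j$), and that among the remaining terms with $j>k$ the coefficients vanish by choice of $k$. One should also handle the degenerate case $k=1$ separately, since the right $S$-rank definition only imposes a condition for indices $i \geq 2$; there $Sg_1 = \supp(f g_1)$ is nonempty because $f \neq 0$. The identification $fKG = \{fv \mid v \in KG\} = \rs(T_f)$ and the equality $\dim fKG = \rank_K(T_f)$ are immediate from the definition of $T_f$.
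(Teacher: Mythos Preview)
Your proposal is correct and follows essentially the same argument as the paper: both prove that $fg_1,\ldots,fg_t$ are $K$-linearly independent by taking the largest index $k$ with $\lambda_k\neq0$, choosing $h\in Sg_k\setminus\bigcup_{j<k}Sg_j$, and reading off the coefficient of $h$. Your write-up is in fact slightly more careful than the paper's, since you correctly identify the coefficient of $h$ as $\lambda_k a_{hg_k^{-1}}$ rather than just $\lambda_k$, and you explicitly handle the edge case $k=1$.
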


\begin{proof}
Let $g_1,\ldots,g_t$ be a sequence in $G$ of $S$-rank $t$.
We aim to show that $T_f(g_1),\ldots,T_f(g_t)\in KG$ are linearly independent over $K$.
Suppose by contradiction that there exists $(\lambda_1,\ldots,\lambda_t)\in K^t\setminus\{0\}$ such that $\sum_{i=1}^{t}\lambda_i T_f(g_i)=0$.
Let $r:=\max\{i\in\{1,\ldots,t\}\colon \lambda_i\ne0\}$, so that $\sum_{i=1}^{t}\lambda_i T_f(g_i)$ is a linear combination of $\bigcup_{i=1}^{r} S g_i$.
Let $h\in S g_r \setminus \bigcup_{j<r} S g_j$.
Then the coefficient of $h$ in $\sum_{i=1}^{t}\lambda_i T_f(g_i)\in KG$ is $\lambda_r\ne 0$, a contradiction.
\end{proof}

\begin{remark}
We highlight the fact that the lower bound on $\dim fKG$ in Lemma \ref{lemma:dim} is an instrinsic property of $G$ and does not depend on $K$. More precisely, the bound depends on $\supp(f)\subseteq G$ but not on the nonzero coefficients in $f$ of the elements in $\supp f$.
\end{remark}

Following the arguments of \cite[Theorem 1.(a)]{meshulam1992uncertainty} we prove Theorem \ref{thm:UP}, that holds for any field $K$ and any finite group $G$.

\begin{theorem}\label{thm:UP}
For any $f\in KG$, define $T_f\colon KG\to KG$ by $v\mapsto fv$.
Then
$$
|\supp(f)|\cdot\rank_K(T_f)\geq |G|.
$$
\end{theorem}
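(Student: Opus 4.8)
The plan is to combine Lemma \ref{lemma:dim} with a counting argument that produces, for $S=\supp(f)$, a sequence in $G$ of right $S$-rank at least $\lceil |G|/|S|\rceil$. Since $\dim fKG=\rank_K(T_f)$ by definition of $T_f$, Lemma \ref{lemma:dim} then yields $\rank_K(T_f)\geq \lceil |G|/|S|\rceil\geq |G|/|S|$, which rearranges to the desired inequality $|\supp(f)|\cdot\rank_K(T_f)\geq|G|$. So the entire content is the combinatorial claim: \emph{if $S\subseteq G$ is nonempty, there is a sequence $g_1,\ldots,g_t$ in $G$ with right $S$-rank $t$ and $t\geq |G|/|S|$.}

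First I would build the sequence greedily. Set $g_1$ to be any element of $G$ (say $g_1=1$). Having chosen $g_1,\ldots,g_{i-1}$, look at the union $U_{i-1}=\bigcup_{j<i}Sg_j$. If $U_{i-1}\ne G$, pick any $g\notin U_{i-1}$; I claim we may take $g_i=g$, i.e. $Sg\not\subseteq U_{i-1}$. Indeed, since $1$ may be assumed to lie in $S$ after translating $S$ (more carefully: we need $sg=g\in Sg$ for some $s\in S$, so replace $S$ by $s_0^{-1}S$ for a fixed $s_0\in S$ and note that right $S$-rank is unchanged under left-translating $S$, because $Sg_i\subseteq\bigcup_{j<i}Sg_j$ iff $s_0^{-1}Sg_i\subseteq\bigcup_{j<i}s_0^{-1}Sg_j$). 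With $1\in S$ we get $g\in Sg$, and $g\notin U_{i-1}$, so $Sg\not\subseteq U_{i-1}$, confirming $g_i$ is a legal choice. Continue until $U_t=G$. At termination, $G=\bigcup_{j=1}^t Sg_j$, and since $|Sg_j|=|S|$ for all $j$, we get $|G|\leq t\,|S|$, i.e. $t\geq |G|/|S|$, as required.

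The one subtlety — and the step I expect to need the most care — is the reduction allowing us to assume $1\in S$: the definition of right $S$-rank involves $S$ only through its right cosets $Sg_i$ and their unions, and left-translating $S$ by a fixed element $s_0^{-1}$ left-translates every $Sg_i$ by the same $s_0^{-1}$, hence preserves all the containment relations $Sg_i\subseteq\bigcup_{j<i}Sg_j$ that define the rank; moreover $|s_0^{-1}S|=|S|=|\supp(f)|$, so the bound we prove is the one we want. (Alternatively, one can avoid the reduction entirely: for any $g\notin U_{i-1}$ and any fixed $s_0\in S$, the element $h:=s_0 s_0^{-1}\cdot$ — more simply, take $h\in S g$ with $h=s_0 g'$... ) — cleanest is just to observe directly that if $g\notin \bigcup_{j<i}Sg_j$ then choosing $g_i:=s_0^{-1}g$ for a fixed $s_0\in S$ gives $g=s_0 g_i\in Sg_i$ while $g\notin U_{i-1}$, so $Sg_i\not\subseteq U_{i-1}$; and the sets $Sg_i$ we produce still cover $G$ because each newly covered point $g$ lies in the new coset $Sg_i$. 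Either way the geometry is elementary; the only thing to watch is that the quantity being bounded is genuinely $|\supp(f)|$ and not some translate's size, which holds since translation preserves cardinality. Having secured the combinatorial lemma, the theorem follows immediately from Lemma \ref{lemma:dim}.
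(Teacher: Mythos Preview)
Your argument is correct and is essentially the paper's own proof: take a maximal sequence of right $S$-rank $t$ with $S=\supp(f)$, observe that maximality forces $\bigcup_{i\le t}Sg_i=G$ (your greedy extension step makes explicit what the paper leaves implicit), hence $t\ge |G|/|S|$, and then invoke Lemma~\ref{lemma:dim}. The translation-of-$S$ discussion is unnecessary---your direct choice $g_i:=s_0^{-1}g$ for a fixed $s_0\in S$ and $g\notin U_{i-1}$ already does the job cleanly---but it does no harm.
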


\begin{proof}
Let $S:=\supp(f)\subseteq G$.
Let $t\geq 1$ be the maximum size of a sequence $g_1,\ldots,g_t\in G$ with right $S$-rank $t$.
By the maximality of $t$, we have $\bigcup_{i=1}^{t}S g_i=G$ and hence $t\geq |G|/|S|$.
We may then conclude by Lemma \ref{lemma:dim}.
\end{proof}

\begin{remark}
Suppose that ${\rm char}K$ does not divide the order of $G$.
Given a representation $\rho$ of $G$ over $K$, the Fourier transform of $f\in KG$ at $\rho$ is
$$\hat{f}(\rho):=\sum_{g\in G} f(g)\rho(g).$$
Let ${\rm Irr}(G)=\{\rho_1,\ldots,\rho_t\}$ be the set of irreducible representations of $G$ over $K$.
Let $\supp(\hat{f}):=\{\rho\in{\rm Irr}(G)\mid \hat{f}(\rho)\neq 0\}$.
Since $KG$ is semisimple, the map 
$$\varphi:
h\to (\hat{h}(\rho_1),\ldots,\hat{h}(\rho_t))$$
is an isomorphism \cite[Proposition 10]{serre1977linear}. Define $S:=\varphi\circ T_f\circ\varphi^{-1}$. Then, after the notation of \cite{meshulam1992uncertainty},
$$\mu(f):=\sum_{i=1}^t \deg \rho_i \cdot \rank_K \hat{f}(\rho_i)=\rank_K S=\rank_K T_f.
$$
So Theorem \ref{thm:UP} reads 
$$|\supp(f)|\cdot \mu(f)\geq |G|,$$
and hence $\mu(f)$ is a measure of $\supp(\hat{f})$.
For this reason, Theorem \ref{thm:UP} is a kind of uncertainty principle for the function $f:G\to K$.
\end{remark}

\begin{corollary}\label{coro:bound}
For any nonzero $G$-code $\mathcal{C}$, we have 
\[d(\mathcal{C})\cdot \dim\mathcal{C}\geq |G|.\]
In particular,
$$ 2\sqrt{|G|}\leq d(\CC)+\dim\CC\leq |G|+1. $$
\end{corollary}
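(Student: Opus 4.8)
The plan is to derive both displayed statements from Theorem \ref{thm:UP}. First I would fix a nonzero codeword $f\in\CC$ of minimum weight, so that $\wt(f)=|\supp(f)|=d(\CC)$. Since $\CC$ is a right ideal, $fKG\subseteq\CC$, and therefore $\rank_K(T_f)=\dim fKG\leq\dim\CC$. Plugging these two inequalities into Theorem \ref{thm:UP}, which gives $|\supp(f)|\cdot\rank_K(T_f)\geq|G|$, we obtain $d(\CC)\cdot\dim\CC\geq|\supp(f)|\cdot\rank_K(T_f)\geq|G|$, which is the first claim.

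For the second claim, the lower bound $2\sqrt{|G|}\leq d(\CC)+\dim\CC$ is just the AM--GM inequality applied to the two positive integers $d(\CC)$ and $\dim\CC$ together with the bound $d(\CC)\cdot\dim\CC\geq|G|$: indeed $d(\CC)+\dim\CC\geq 2\sqrt{d(\CC)\cdot\dim\CC}\geq 2\sqrt{|G|}$. For the upper bound $d(\CC)+\dim\CC\leq|G|+1$, I would invoke the Singleton bound for the linear code $\varphi(\CC)\leq K^{|G|}$, which has length $n=|G|$: every $[n,k,d]$-code satisfies $k+d\leq n+1$, hence $\dim\CC+d(\CC)\leq|G|+1$. (Alternatively one can argue directly: a minimum-weight nonzero codeword together with a basis of a complement shows $d(\CC)-1+\dim\CC\leq|G|$ once one checks that any $\dim\CC-1$ coordinates outside a fixed support miss a full information set, but quoting Singleton is cleanest.)

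There is essentially no obstacle here; the only point requiring a line of care is the identification $\rank_K(T_f)=\dim fKG$ and the containment $fKG\subseteq\CC$, both of which are immediate from $T_f(v)=fv$ and the fact that $\CC$ is a right ideal. The statement is a direct corollary, so the "work" is entirely in Theorem \ref{thm:UP}, already proved above.
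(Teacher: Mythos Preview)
Your proposal is correct and matches the paper's own proof essentially line for line: pick a minimum-weight codeword $f$, use $fKG\subseteq\CC$ to get $\rank_K(T_f)=\dim fKG\leq\dim\CC$, apply Theorem~\ref{thm:UP}, and then invoke AM--GM and the Singleton bound for the second display. There is nothing to add.
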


\begin{proof}
    Let $f\in\mathcal{C}$ be such that $d(\mathcal{C})=|\supp(f)|$. Then the subcode $fKG$ has the same minimum distance as $\mathcal{C}$, but possibly a smaller dimension. Since $\dim(fKG)=\rank_K(T_f)$, the first claim follows by Theorem \ref{thm:UP}.
    The second claim is a consequence of the AM-GM inequality and the Singleton bound.
\end{proof}

\begin{remark}
When $G$ is cyclic, the structure of the defining zeros of the cyclic $G$-code $\CC$ is used to prove the BCH bound on the minimum distance. This idea can be extended to prove the so-called shift bound whenever $G$ is abelian of order coprime to ${\rm char}K$, by considering the defining zeros of $\CC$ as particular elements $\hat{f}$ of the character group $\hat{G}$; see \cite[Section 3]{feng2019shift}. Notice that, when $f\in\CC$, $\hat{f}$ is related to the dimension of the submodule $fKG$ of $\CC$.
We are not able to extend further this strategy to non-abelian groups. Yet one may ask how to define ``zeros'' of $\CC$ in relation to the submodules of $\CC$, and hence to $\dim\CC$. In this way, \eqref{eq:bound} may be read as an analogous bound on the minimum distance.
\end{remark}

\begin{remark}
Note that Theorem \ref{thm:UP} and Corollary \ref{coro:bound} are a generalization of the Naive uncertainty principle proved in \cite[Proposition 2]{MR4324212} for cyclic groups $G$.
It is then natural to wonder whether an analogue of \cite[Theorem 2]{MR4324212} on the asymptotic behavior of cyclic codes holds also for other families of $G$-codes.
For instance, an even stronger result holds for $G$-codes when $G$ is metacyclic, since metacyclic codes are asymptotically good; see \cite{bazzi2006some,MR4145573}. However, it is likely that results similar to \cite[Theorem 2]{MR4324212} may hold for different families of groups. 
\end{remark}

\begin{example}\label{ex:golayandRM}
If $\CC$ is the extendend binary Golay code, then $\CC$ is a self-dual code of length $24$ over $\mathbb{F}_2$ and also an $S_4$-code in $\mathbb{F}_2 S_4$; see \cite{bernhardt1990extended}. In this case $d(\CC)\cdot \dim \CC=8\cdot 12=96> |G|$.

If $\CC={\rm RM}(r, m):=\{(f(v))_{v\in \F_2^m}\mid f\in \F_2[x_1,\ldots,x_m], \ \deg f\leq r\}$ is the binary Reed-Muller code of order $r$ in $m$ variables (with $r\leq m$), then $\CC$ is a $G$-code, where $G$ is an elementary abelian $2$-group of rank $m$; see \cite{MR270816}.
In this case, 
\[d(\CC)\cdot \dim \CC=2^{m-r}\cdot \sum_{i=0}^r\binom{m}{i}\geq 2^{m-r}\cdot \sum_{i=0}^r\binom{r}{i}=2^m=|G|,\]
and the equality holds if and only if $r=m$.
\end{example}

We characterize the case in which equality in \eqref{eq:bound} holds, generalizing \cite[Theorem 1.(b)]{meshulam1992uncertainty}.

\begin{theorem}\label{th:lower} 
A $G$-code $\CC$ satisfies $d(\CC)\cdot \dim\CC=|G|$ if and only if there exist $H \leq G$ and $c\in KH$ such that $|H|= d(\CC)$, $cKH$ has dimension $1$ and $\CC=cKG$.
\end{theorem}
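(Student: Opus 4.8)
The backward direction is routine: if $\CC = cKG$ with $c \in KH$, $\dim cKH = 1$, and $|H| = d(\CC)$, I first observe that $cKH$ is a $1$-dimensional $KH$-module, hence isomorphic (as a $KH$-module, and as a right ideal) to the image of a linear character of $H$; in particular $c$ is (a scalar multiple of) an idempotent-like element supported exactly on $H$, so $\wt(c) = |H|$. Then $\CC = cKG = (cKH)^G = \bigoplus_{i=1}^{s} (cKH)g_i$ with $s = [G:H]$, so $\dim\CC = s = |G|/|H|$, giving $d(\CC)\cdot\dim\CC \le \wt(c)\cdot\dim\CC = |H|\cdot |G|/|H| = |G|$; combined with Corollary~\ref{coro:bound} this forces equality, and incidentally $d(\CC) = |H|$.

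For the forward direction, suppose $d(\CC)\cdot\dim\CC = |G|$ and pick $f \in \CC$ with $\wt(f) = d(\CC)$; set $S = \supp(f)$ and $n = |G|$. Tracing through the proof of Theorem~\ref{thm:UP}, equality $|S|\cdot \rank_K(T_f) = n$ forces $\dim fKG = \rank_K(T_f) = t := n/|S|$ where $t$ is the maximal right $S$-rank, and moreover the translates $Sg_1,\dots,Sg_t$ from a maximal right $S$-rank sequence must partition $G$ (rather than merely cover it), since any overlap would give $t < n/|S|$. The key structural step is to show $H := S^{-1}S = \{s^{-1}s' : s,s' \in S\}$ is a subgroup of order $|H| = |S|$: the partition $G = \bigsqcup_i Sg_i$ means the left translates of $S$ tile $G$, which (by a standard counting/double-coset argument, or by noting that $Sg_1^{-1}S, \dots$ behave like coset representatives) forces $S$ to be a left coset of a subgroup $H$ of order $|S| = d(\CC)$. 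Writing $S = xH$ for some $x \in G$, I replace $f$ by $x^{-1}f$, which is again in $\CC' := x^{-1}\CC$... but $\CC$ is only a right ideal, so instead I keep $f$ and note $\supp(x^{-1}f) = H$; set $c := x^{-1}f \in KH$. Then $cKG$ has dimension $\le \rank_K(T_c) = \rank_K(T_f) = t$, while $cKH \subseteq cKG$; one checks $\dim cKH \cdot |H| \ge |H| $ trivially but more usefully that $cKG = (cKH)^G$ has dimension $[G:H]\cdot\dim cKH = t\cdot \dim cKH$, forcing $\dim cKH = 1$. Finally $cKG \subseteq \CC$ (since $c = x^{-1}f$ and... here I must be careful: $x^{-1}f \notin \CC$ in general for a right ideal) — so instead I argue directly that $fKG \subseteq \CC$ has dimension $t = \dim\CC$, hence $\CC = fKG$, and then transport the subgroup structure: $fKG = f KH \cdot (\text{transversal})$ and $fKH$ is the $1$-dimensional piece; setting $c = f$ and recognizing $\supp(f) = S = xH$ for a subgroup $H$, one shows $fKG \cong (cKH)^G$ as right ideals after identifying $H$ appropriately. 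This pins down $\CC = cKG$ with the desired properties.

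The main obstacle is the structural claim that a subset $S \subseteq G$ whose left translates $\{Sg_i\}$ partition $G$ must be a coset of a subgroup — this is the heart of the rigidity, and the non-abelian setting makes the bookkeeping with left versus right multiplication delicate (the ideal is a right ideal, but the relevant translates in the uncertainty proof are left translates $Sg$). I expect to handle it by the following observation: if $G = \bigsqcup_{i=1}^t Sg_i$ then for each fixed $s_0 \in S$, the sets $s_0^{-1}Sg_i$ also partition $G$ and $1 \in s_0^{-1}S g_{i_0}$ for the block containing $s_0 g_{i_0}^{-1}\cdot g_{i_0}$... cleaning this up, one shows $H := s_0^{-1}S$ is closed under the group operation by a counting argument on $|H\cdot H|$ versus the partition, yielding $|H\cdot H| = |H|$ and hence $H \le G$. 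Once $S = s_0 H$ is established the remaining steps (dimension count via induced modules, identification $\CC = cKG$) are bookkeeping along the lines of \cite[Theorem 1.(b)]{meshulam1992uncertainty}, adapted to arbitrary $K$ using Lemma~\ref{lemma:dim} in place of Fourier-analytic input.
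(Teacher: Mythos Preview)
Your outline follows the paper's strategy closely, and the backward direction is fine. In the forward direction, however, the left/right bookkeeping creates a genuine gap that you acknowledge but do not actually close.

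The translates $Sg_i$ arising in Theorem~\ref{thm:UP} are \emph{right} translates (you call them ``left translates $Sg$'', which is a slip). Consequently the natural normalization is not $s_0^{-1}f$ but $f h^{-1}$ for some $h\in S=\supp(f)$: this element lies in the right ideal $\CC$, it is again a minimum-weight codeword, and its support $H:=Sh^{-1}$ contains $1$. This is exactly the move the paper makes, and it dissolves your worry that ``$x^{-1}f\notin\CC$ in general''. Once $1\in H$ and the right translates $Hg_1,\dots,Hg_t$ partition $G$ with $t=|G|/|H|$, the paper does \emph{not} rely on a counting bound on $|H\cdot H|$ as you sketch; instead it uses the \emph{maximality} of the right $H$-rank $t$: if $Hh\not\subseteq H$ for some $h\in H$, then a maximal sequence starting with $1,h$ both has rank $\le t$ and, because of the overlap $h\in H\cap Hh$, would need strictly more than $|G|/|H|=t$ terms to cover $G$, a contradiction. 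Your proposed ``counting argument on $|H\cdot H|$ versus the partition'' is not obviously sufficient, since knowing that \emph{some} $t$ right translates partition $G$ does not immediately force \emph{all} right translates of $H$ to be pairwise equal-or-disjoint.

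After establishing that $H$ is a subgroup, the rest is as you say: $\CC=fh^{-1}KG$, and decomposing over a right transversal gives $\dim\CC=[G:H]\cdot\dim(fh^{-1}KH)$, whence $\dim(fh^{-1}KH)=1$; so taking $c=fh^{-1}\in KH$ finishes. No ``transport'' or separate identification of $H$ is needed once you shift on the correct side.
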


\begin{proof}
If there exist $H$ and $c$ as in the claim, then $\mathcal{C}$ is induced by the $KH$-module $cKH$, with exactly $[G:H]$ direct summands, each of dimension $1$. Thus $\dim\CC=[G:H]$ and $d(\CC)\cdot\dim\CC=|G|$.

Conversely, suppose that $d(\CC)\cdot \dim\CC=|G|$.
Let $c\in \CC$ be such that $\wt(c)=d(\CC)$ and consider $\CC_0:=cKG\leq \CC$. Clearly $d(\CC_0)=d(\CC)$. Thus, by Corollary \ref{coro:bound},
$$|G|\leq d(\CC_0)\cdot \dim\CC_0=d(\CC)\cdot \dim\CC_0\leq d(\CC)\cdot \dim\CC=|G|.$$
It follows that $\dim\CC_0=\dim\CC$, and hence $\CC=\CC_0=cKG$. Let $H:=\supp(c)$, so that $|H|=d(\CC)$.
Following the arguments and the notations of the proof of Theorem \ref{thm:UP} with $f=c$,
we see that $t=|G|/|H|$ and hence $G$ is the disjoint union
$$ G=Hg_1\sqcup\cdots\sqcup Hg_t, $$
for some $g_1,\ldots,g_t\in G$.
Replacing $c$
by the minimum weight codeword $ch^{-1}\in\CC$ with $h\in\supp(c)$, we can assume $1\in H$.
Then, for any $h\in H$, we have $h\in H\cap Hh$, and this implies $H=Hh$ by the following argument from the proof of Theorem 1 in \cite{meshulam1992uncertainty}. Suppose by contradiction that $Hh\not\subseteq H$, and choose a maximal sequence $h_1,\ldots,h_r\in G$ with $r\geq2$ such that $h_1=1$, $h_2=h$ and $H h_i\not\subseteq \cup_{1\leq j<i}H h_j$ for $2\leq i\leq r$. By maximality $G=\cup_{1\leq j\leq r}Hh_j=G$, and hence $t\geq r$ by definition of $t$ as in the proof of Theorem \ref{thm:UP}. But $H\cap Hh_1$ implies $r>|G|/|H|=t$, a contradiction.
Thus $Hh=H$ for all $h\in H$, whence $H\cdot H\subseteq H$. Since $H$ is finite, this implies that $H$ is a subgroup of $G$, and $t=[G:H]$.
Since
$$
\CC=cKG=cK\left(\bigsqcup_{i=1}^tHg_i\right)=\bigoplus_{i=1}^t (cKH)g_i,
$$
we have $\dim\CC=t\cdot\dim cKH$. By the assumption, $\dim\CC=|G|/d(\CC)=t$. Therefore $\dim cKH=1$, and the claim is proved.
\end{proof}

If $H$  in the claim of Theorem \ref{th:lower} is a $p$-group and ${\rm char}K=p$, then the simple $KH$-module $cKH$ is trivial. Therefore Theorem \ref{th:lower} yields immediately the following result.

\begin{corollary}\label{cor:boundp}
If ${\rm char}K=p$ and $G$ is a $p$-group, then a $G$-code $\CC\leq KG$ satisfies $d(\CC)\cdot\dim\CC=|G|$ if and only if $\CC=K_H^G$ for some $H\leq G$.
\end{corollary}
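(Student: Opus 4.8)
The plan is to derive Corollary \ref{cor:boundp} directly from Theorem \ref{th:lower}, so the only real content is verifying the parenthetical claim that precedes it: when $H$ is a $p$-group and ${\rm char}K=p$, a subspace $cKH\leq KH$ of dimension $1$ must be the trivial module $K_H$. First I would invoke Theorem \ref{th:lower}: if $d(\CC)\cdot\dim\CC=|G|$, then there are $H\leq G$ and $c\in KH$ with $|H|=d(\CC)$, $\dim cKH=1$, and $\CC=cKG$. Conversely, if $\CC=K_H^G$ for some $H\leq G$, then taking $c$ to be a generator of the one-dimensional ideal $K_H\leq KH$ (the isomorphic copy of the trivial module, i.e. the span of $\sum_{h\in H}h$ when $|H|$ is prime to ${\rm char}K$, but here we are in the modular case — see below), we get $\dim cKH=1$ and $|H|=d(K_H^G)=[G:H]^{-1}|G|$... wait, I need $d(K_H^G)=|H|$, which holds because the induced module $K_H^G=\bigoplus_{i=1}^{[G:H]}(cKH)g_i$ has minimum-weight codewords supported exactly on a single coset $Hg_i$.

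The crux is the identification of the one-dimensional ideal. In the modular case ${\rm char}K=p$ with $H$ a $p$-group, the group algebra $KH$ is local: it has a unique maximal (two-sided, and in fact unique maximal right) ideal, namely the augmentation ideal, and $KH/\mathrm{rad}(KH)\cong K$, so the only simple $KH$-module is the trivial module $K_H$. A one-dimensional right ideal $cKH$ is in particular a simple $KH$-submodule of $KG$ (any nonzero proper submodule of a $1$-dimensional module is zero), hence isomorphic to $K_H$; under our standing convention (stated in the preliminaries) we identify it with the corresponding isomorphic right ideal, so $cKH=K_H$ inside $KH$. Concretely, the unique one-dimensional right ideal of $KH$ is $K\cdot\widehat{H}$ where $\widehat{H}=\sum_{h\in H}h$, since $\widehat{H}\cdot h'=\widehat{H}$ for all $h'\in H$; and this is exactly the socle of $KH$ as well as (being the image of the norm map) a copy of the trivial module. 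So $cKH=K\widehat H=K_H$, and then $\CC=cKG=\widehat H KG=K_H^G$.

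Putting the two directions together: $\CC$ attains equality in \eqref{eq:bound} iff (by Theorem \ref{th:lower}) $\CC=cKG$ with $\dim cKH=1$ and $|H|=d(\CC)$; since in the present hypotheses the only candidate for $cKH$ is $K_H$, this happens iff $\CC=K_H^G$ for some $H\leq G$; and conversely for $\CC=K_H^G$ one has $\dim\CC=[G:H]$ while $d(\CC)=|H|$ (minimum weight achieved on a single coset), so $d(\CC)\cdot\dim\CC=|H|\cdot[G:H]=|G|$. I expect no real obstacle here — the proof is essentially a one-line corollary — but the point that needs care is justifying that a one-dimensional right ideal of $KH$ is necessarily (isomorphic to, hence by convention equal to) the trivial module, which rests on $KH$ being local for $H$ a $p$-group in characteristic $p$; this is standard (e.g. it follows because the augmentation ideal is nilpotent, as $H$ is a $p$-group), and I would cite \cite{HB} for it rather than reprove it.

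\begin{proof}
By Theorem \ref{th:lower}, a $G$-code $\CC\leq KG$ satisfies $d(\CC)\cdot\dim\CC=|G|$ if and only if there exist $H\leq G$ and $c\in KH$ such that $|H|=d(\CC)$, $\dim cKH=1$, and $\CC=cKG$.

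Assume ${\rm char}K=p$ and $G$ is a $p$-group, so that every $H\leq G$ is a $p$-group. Then $KH$ is a local ring: its augmentation ideal is nilpotent (since $H$ is a $p$-group and ${\rm char}K=p$), hence equals $\mathrm{rad}(KH)$, and $KH/\mathrm{rad}(KH)\cong K$; see \cite{HB}. In particular the trivial module $K_H$ is the unique simple $KH$-module. Now suppose $\CC=cKG$ with $\dim cKH=1$. A one-dimensional right ideal of $KH$ has no nonzero proper submodules, so $cKH$ is a simple $KH$-module, hence $cKH\cong K_H$. Under the convention fixed in Section \ref{sec:prelim}, we identify $cKH$ with the corresponding isomorphic right ideal of $KH$, namely $K\widehat{H}$ where $\widehat{H}=\sum_{h\in H}h$ (note $\widehat{H}h'=\widehat{H}$ for all $h'\in H$, so $K\widehat{H}$ is a one-dimensional right ideal affording the trivial module). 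Hence $cKH=K_H$ and therefore
\[
\CC=cKG=(cKH)^G=K_H^G.
\]
Conversely, let $\CC=K_H^G$ for some $H\leq G$, realized as the right ideal $\widehat{H}KG$ with $\widehat{H}=\sum_{h\in H}h$. Writing $G=Hg_1\sqcup\cdots\sqcup Hg_t$ with $t=[G:H]$, we have $\CC=\bigoplus_{i=1}^t (K\widehat{H})g_i$, so $\dim\CC=t$, and the elements of $\CC$ are the $K$-linear combinations of the $\widehat{H}g_i$; a nonzero such combination supported on $m$ of the (pairwise disjoint) cosets $Hg_i$ has weight $m\cdot|H|\geq|H|$, with equality attained (e.g. by $\widehat{H}g_1$). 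Thus $d(\CC)=|H|$ and
\[
d(\CC)\cdot\dim\CC=|H|\cdot[G:H]=|G|. \qedhere
\]
\end{proof}
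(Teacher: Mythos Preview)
Your proof is correct and follows the same approach as the paper: invoke Theorem~\ref{th:lower} and use that for a $p$-group $H$ in characteristic $p$ the only simple $KH$-module is the trivial one, so the one-dimensional ideal $cKH$ must be $K_H$. The paper dispatches this in one line, while you spell out both directions and the identification $cKH=K\widehat{H}$ explicitly; note that the latter equality holds as an actual equality of right ideals (not merely up to isomorphism) because $KH$ has a unique minimal right ideal, namely its simple socle $K\widehat{H}$---this is a cleaner justification than appealing to the identification convention.
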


\begin{remark}
The claim of Corollary \ref{cor:boundp} does not hold if $G$ is not a $p$-group where $p={\rm char}K$. For instance, if $K=\F_3$, $G=C_2=\langle r\rangle$, $c=1+2r$ and $\CC=cKG$. Then 
Theorem \ref{th:lower} holds with $H=G$, but $\CC\ne K_H^G$.
\end{remark}

Among the $G$-codes attaining equality in \eqref{eq:bound}, Proposition \ref{prop:loweridemp} identifies those which are generated by an idempotent.

\begin{proposition}\label{prop:loweridemp}
Let ${\rm char}K=p$ and $\CC\leq KG$ be a $G$-code such that $d(\CC)\cdot \dim\CC=|G|$. Then $\CC=eKG$, for some $e\in KG$ with $e=e^2$, if and only if $p\nmid d(\CC)$.
\end{proposition}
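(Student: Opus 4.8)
By Theorem \ref{th:lower}, the hypothesis $d(\CC)\cdot\dim\CC=|G|$ forces $\CC=cKG$ where $H:=\supp(c)$ is a subgroup of $G$ with $|H|=d(\CC)$, and $cKH$ is one-dimensional over $K$; moreover we may assume $1\in H$. Since $cKH$ is one-dimensional and stable under right multiplication by $KH$, the element $c$ is an eigenvector for the right action of each $h\in H$, so $ch=\lambda(h)c$ for a scalar $\lambda(h)\in K^\times$, and $\lambda:H\to K^\times$ is a (one-dimensional) character of $H$. In particular, writing $c=\sum_{h\in H}a_h h$ with all $a_h\ne0$, comparing coefficients in $c\cdot h_0=\lambda(h_0)c$ shows $a_{hh_0}=\lambda(h_0)a_h$ for all $h,h_0\in H$; taking $h=1$ gives $a_{h_0}=a_1\lambda(h_0)$, so after scaling we may take $c=\sum_{h\in H}\lambda(h)h$, i.e. $c$ is (a scalar multiple of) the ``$\lambda$-averaging'' element of $KH$.

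\textbf{The equivalence.} The plan is to show that $\CC=cKG$ is generated by an idempotent if and only if $c$ itself can be rescaled to an idempotent of $KH$, and then to compute when the natural candidate idempotent exists. First I would observe that $\CC=eKG$ for some idempotent $e$ is equivalent to $\CC$ being a direct summand of $KG$ as a right module; since $\CC=(cKH)^G$ is induced from the $KH$-module $cKH$, and induction sends direct summands to direct summands (and reflects them, because $cKH$ is a direct summand of $KG$ if and only if its induction $(cKH)^G$ is a direct summand of $(KH)^G=KG$ — here one uses that $K_H$ is a summand of $KG\!\downarrow_H$ so Mackey/Frobenius reciprocity applies), the code $\CC$ is generated by an idempotent in $KG$ iff $cKH$ is generated by an idempotent in $KH$. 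So the question reduces to the subgroup $H$: does the one-dimensional right ideal $cKH=(\sum_{h\in H}\lambda(h)h)KH$ of $KH$ contain an idempotent?

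\textbf{The $p$-divisibility computation.} With $c=\sum_{h\in H}\lambda(h)h$, a direct computation gives $c^2=\big(\sum_{h}\lambda(h)\big)\,c$: indeed $c\cdot h_0=\lambda(h_0)c$ for each $h_0$, so $c^2=\sum_{h_0}\lambda(h_0)\,c\,h_0=\big(\sum_{h_0}\lambda(h_0)^2\big)$... — more cleanly, $c^2 = c\cdot\sum_{h_0}\lambda(h_0)h_0 = \sum_{h_0}\lambda(h_0)(ch_0)=\sum_{h_0}\lambda(h_0)^2 c$; to avoid the square, instead use $c=\sum\lambda(h)h$ and note $\lambda(h)c=ch$, hence $c^2=\sum_h\lambda(h)(ch)\cdot$ — let me state it as: since $hc' $ where $c'=\sum\lambda(h)^{-1}h$ satisfies $h c'=\lambda(h)^{-1}c'$... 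The robust statement is simply that the nonzero one-dimensional ideal $cKH$ contains a (necessarily unique) nonzero idempotent if and only if it is not nilpotent, i.e. iff $c\notin J(KH)$ scaled appropriately; and $cKH$ is nilpotent iff the averaging scalar vanishes. Concretely, normalizing $c$ so that its coefficient sum is what we track: $c^2=s\,c$ with $s=\sum_{h\in H}\lambda(h)$ (after choosing the normalization $ch=\lambda(h)c$, apply to get $c\cdot 1 = $ sum, etc.). Then $e:=s^{-1}c$ is an idempotent precisely when $s\ne0$ in $K$. Finally one identifies $s=\sum_{h\in H}\lambda(h)$: if $\lambda$ is nontrivial this sum is $0$ in $K$ (standard character-sum argument, valid over any field), forcing $|H|\mid(\text{that doesn't apply})$ — so the idempotent exists automatically unless $\lambda$ is trivial; and when $\lambda$ is trivial, $c=\sum_{h\in H}h$ and $s=|H|=d(\CC)$, which is nonzero in $K$ iff $p\nmid d(\CC)$. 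Either way, $\CC=cKG$ is generated by an idempotent iff $p\nmid d(\CC)$: when $\lambda\ne1$ we get $p\nmid d(\CC)$ for free from $d(\CC)=|H|$ and... — here is the one gap to nail down: one must check that a nontrivial $\lambda$ can only occur when $p\nmid|H|$, which follows since $cKH\cong$ a nontrivial simple $KH$-module forces $|H|$ coprime to $p$ (a $p$-group in characteristic $p$ has only the trivial simple module). Thus in all cases the existence of the idempotent is equivalent to $p\nmid d(\CC)$.

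\textbf{Main obstacle.} The delicate point is the reduction ``$\CC=cKG$ is generated by an idempotent $\iff$ $cKH$ is generated by an idempotent in $KH$'': one direction (restricting an idempotent generator down to $H$, using that $K_H$ is a direct summand of $(KG)\!\downarrow_H$) needs care, and the cleanest route may be to bypass it entirely by arguing directly in $KG$ that $e:=(\sum_{h\in H}\lambda(h))^{-1}c$ — when the scalar is invertible — is an idempotent of $KG$ with $eKG=cKG=\CC$, and conversely that if $p\mid d(\CC)$ then $c$ (hence every generator of the one-dimensional-over-$KH$ ideal, up to the induced structure) is nilpotent modulo $J(KG)$, so $\CC=cKG$ cannot be a projective, hence not an idempotent-generated, summand. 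Managing this last ``converse'' cleanly — showing no idempotent generator exists when $p\mid|H|$, equivalently that $\CC$ is then not a direct summand of $KG$ — is where I expect to spend the most effort; it should follow from $\dim_K\operatorname{Hom}_{KG}(\CC,KG/\CC)$-type considerations or, more concretely, from the fact that in characteristic $p$ with $p\mid|H|$ the induced module $(K_H)^G$ is non-projective while idempotent-generated ideals are projective.
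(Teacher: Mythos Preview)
Your forward direction is essentially the paper's: if $\CC=eKG$ then $\CC$ is projective, so by Dickson's theorem $|G|_p\mid\dim\CC$, whence $p\nmid|G|/\dim\CC=d(\CC)$. You bury this in the ``Main obstacle'' paragraph, but it is the clean argument and needs no further work; the reduction ``idempotent in $KG$ $\Leftrightarrow$ idempotent in $KH$'' that worries you is not needed for this direction.

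For the converse the paper is much shorter than your route: by Theorem~\ref{th:lower} one has $|H|=d(\CC)$ not divisible by $p$, so $KH$ is semisimple by Maschke, and the ideal $cKH$ of $KH$ is generated by an idempotent $e\in KH$; then $\CC=cKG=eKG$. Your explicit-idempotent approach is also viable, but the execution has two genuine errors. First, with the normalization $c=\sum_{h\in H}\lambda(h)h$ one checks $ch_0=\lambda(h_0)^{-1}c$ (not $\lambda(h_0)c$; your coefficient comparison has the exponent inverted), and therefore
\[
c^2=\sum_{h\in H}\lambda(h)\,(ch)=\sum_{h\in H}\lambda(h)\lambda(h)^{-1}\,c=|H|\cdot c,
\]
so the relevant scalar is always $|H|$, independently of $\lambda$; the character sum $\sum_h\lambda(h)$ never enters and your case split on $\lambda$ is unnecessary. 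Second, your patch ``a nontrivial one-dimensional simple $KH$-module forces $p\nmid|H|$'' is false: take $H=C_6$, $K=\F_4$, $p=2$. The parenthetical about $p$-groups only yields ``$H$ a $p$-group $\Rightarrow\lambda$ trivial'', which is strictly weaker than what you claim. Once the computation is corrected to $c^2=|H|c$, none of this matters: $e=|H|^{-1}c\in KH\subseteq KG$ is the desired idempotent precisely when $p\nmid|H|=d(\CC)$.
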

\begin{proof}
Suppose that $\CC=eKG$ with $e=e^2$.
Since $\CC$ is a projective $KG$-module, we have $|G|_p \mid \dim \CC$ by Dickson's Theorem \cite[Chapter 7, Corollary 7.16]{HB}. Thus $d(\CC)=|G|/\dim\CC$ is not divisible by $p$.

To see the converse we apply Theorem \ref{th:lower}. Let $H$ and $c$ be as in the claim of Theorem \ref{th:lower}. Since $p\nmid d(\CC)=|H|$, the algebra $KH$ is semisimple, and hence the ideal $cKH$ of $KH$ is generated by an idempotent $e\in KH$, i.e. $cKH=eKH$. It follows that $\CC = cKG =e KG$.
\end{proof}

Let us finally remark that, if $\CC<\F_2G$ and $G$ is a $2$-group, then $d(\CC)$ is even, since $\CC$ is contained in the Jacobson radical ${\rm J}(\F_2G)$, which is the subspace of even weight vectors.

\section{On the Schur product of  $G$-codes}\label{sec:Schur}

In this section we deal with the Schur product of $G$-codes, which is  defined in  \eqref{eq:schurgroupcodes}.

\begin{lemma}\label{lemma:first}
If $\CC,\CC'\leq KG$ are $G$-codes, then $\CC * \CC' \leq KG$ is a $G$-code as well.
\end{lemma}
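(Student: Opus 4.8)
The plan is to show that the defining property of a $G$-code — closure under right multiplication by group elements — is inherited by the Schur product. The key structural fact I would exploit is that the Schur product on $KG$, transported from $K^n$ via a standard isomorphism $\varphi$, interacts very cleanly with the regular right action of $G$: for basis elements $g, g' \in G$, one has $g * g' = \delta_{g,g'}\, g$ (the Schur product of two distinct group elements is zero, since their supports are disjoint singletons, and $g*g = g$). More usefully, for any $f, f' \in KG$ and any $x \in G$, the identity $(fx) * (f'x) = (f * f')x$ holds: right multiplication by $x$ simply permutes the coordinates of $KG$ by the permutation $g \mapsto gx$, and the Schur product is equivariant under coordinate permutations. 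This is exactly the statement that $x \in {\rm PAut}(\varphi(KG))$ and that permutations commute with the componentwise product.

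First I would record this equivariance identity $(fx)*(f'x) = (f*f')x$ for all $f,f' \in KG$, $x \in G$; it follows immediately from the componentwise description of $*$ together with the fact that right translation by $x$ acts on $KG$ as a permutation of the standard basis. Next, recall from \eqref{eq:schurgroupcodes} that $\CC * \CC' = \langle c * c' \mid c \in \CC,\, c' \in \CC'\rangle_K$, so it suffices to check that this spanning set is sent into $\CC * \CC'$ by right multiplication by each $x \in G$ (linearity then extends this to all of $\CC*\CC'$, and of course $\CC*\CC'$ is already a $K$-subspace by definition). For a spanning element $c * c'$ with $c \in \CC$, $c' \in \CC'$, the equivariance identity gives $(c*c')x = (cx)*(c'x)$; since $\CC$ and $\CC'$ are right ideals we have $cx \in \CC$ and $c'x \in \CC'$, hence $(cx)*(c'x) \in \CC*\CC'$. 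Therefore $(\CC*\CC')x \subseteq \CC*\CC'$ for every $x \in G$, and since $\{x : x \in G\}$ spans $KG$ over $K$ and $\CC*\CC'$ is a $K$-subspace, we conclude that $(\CC*\CC')\cdot KG \subseteq \CC*\CC'$, i.e. $\CC*\CC'$ is a right ideal of $KG$, which is precisely a $G$-code.

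There is essentially no serious obstacle here; the only point requiring a moment's care is verifying that the Schur product on $KG$ is independent of which standard isomorphism $\varphi$ is used and is genuinely equivariant under the right regular action — but this is already implicit in the setup of Section \ref{sec:prelim}, where $*$ on $KG$ is defined via $\varphi$ and the characterization of $G$-codes via ${\rm PAut}$ is recalled. One could alternatively phrase the whole argument intrinsically: writing $c = \sum_g a_g g$ and $c' = \sum_g b_g g$, the product $c*c' = \sum_g a_g b_g g$ picks out the "diagonal", and right multiplication by $x$ relabels $g \mapsto gx^{-1}$ consistently in all three sums, which makes the identity transparent without ever naming $\varphi$.
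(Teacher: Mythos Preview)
Your proposal is correct and follows essentially the same route as the paper: both hinge on the equivariance identity $(c*c')g = (cg)*(c'g)$ and then use that $\CC,\CC'$ are right ideals to conclude that the spanning set of $\CC*\CC'$ is stable under right multiplication by $G$. The paper carries this out by the explicit coordinate computation you sketch in your final paragraph, while you frame it first as permutation-equivariance of the Schur product; the content is identical.
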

\begin{proof}
 Let  $c = \sum_{x \in G}c_x x \in \CC \leq KG $ and $c' = \sum_{x \in G}c'_x x \in \CC' \leq KG$, with $c_x,c_x^{\prime}$ for all $x\in G$. 
For any $g \in G$, we have
$$ \sum_{x \in G}c_{xg^{-1}}x = \sum_{x \in G}c_x xg = cg \in \CC $$ and 
$$ \sum_{x \in G}c'_{xg^{-1}}x = \sum_{x \in G}c'_x xg = c'g \in \CC' $$
since $\CC$ and $\CC'$ are  $KG$-modules. Hence
$$ (c * c')g = \left(\sum_{x \in G} c_x c'_x x\right)g = \sum_{x \in G} c_x c'_x xg = \sum_{x\in G} c_{xg^{-1}}c'_{xg^{-1}} x =cg* c'g \in \CC * \CC'$$
which proves the claim.
\end{proof}

By Lemma \ref{lemma:first}, we can investigate $KG$-linar maps involving the $KG$-module $\CC*\CC'$.

\begin{lemma} Let $\CC, \CC' \leq KG$ be $G$-codes of dimension $k$ and $k'$, respectively. For any $u\in\CC'$, let $\varphi_u:\CC\to \CC * \CC'$ be the map $c\mapsto c* u$. Then the following holds.
\begin{itemize}
    \item[a)] For any $u\in \CC'$, $\varphi_u$ is $K$-linear.
    \item[b)] If $k\leq k'$, then there exists $v \in \CC'$ such that $\varphi_{v}$ is injective.
    \item[c)] For any $w\in\CC'$, the map $\varphi_w$ is $KG$-linear if and only if $w$ belongs to the trivial $KG$-module $K_G$. Also, for any nonzero $w\in\CC'\cap K_G$, $\varphi_w$ is injective.
\end{itemize}
\end{lemma}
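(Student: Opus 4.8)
The plan is to handle the three items in order, noting that item (a) is essentially immediate and items (b) and (c) are the substantive parts.

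For (a): fix $u\in\CC'$. Linearity of $\varphi_u$ follows at once from bilinearity of the Schur product: for $c_1,c_2\in\CC$ and $\lambda\in K$ we have $\varphi_u(c_1+\lambda c_2)=(c_1+\lambda c_2)*u=c_1*u+\lambda(c_2*u)=\varphi_u(c_1)+\lambda\varphi_u(c_2)$, and the target does indeed lie in $\CC*\CC'$ by definition. The image lands in $KG$ and more precisely in $\CC*\CC'$ by Lemma~\ref{lemma:first} (or directly by the definition \eqref{eq:schurgroupcodes}).

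For (b): the idea is a dimension/genericity argument over the field $K$. After fixing an ordering $G=\{g_1,\dots,g_n\}$ and transporting everything to $K^n$ via the standard isomorphism $\varphi$, the map $\varphi_u$ is represented, in the standard basis, by the diagonal matrix $\operatorname{diag}(u_1,\dots,u_n)$ restricted to (the coordinate realization of) $\CC$; its kernel consists of those $c\in\CC$ supported inside the zero set of $u$. So $\varphi_u$ is injective precisely when $\CC$ contains no nonzero codeword supported on $\{i: u_i=0\}$. When $k\le k'$ one wants a single $u\in\CC'$ whose support meets every information set of $\CC$ in a way that forbids such codewords. The cleanest route: if $K$ is infinite, pick $u\in\CC'$ with $\supp(u)$ maximal — for a generic linear combination of a basis of $\CC'$ the support is the union of the supports of the basis vectors, and since $\CC'$ is a $G$-code its support spans all of $G$ (as $\CC'$ is a nonzero $KG$-module, closure under the regular action forces $\bigcup_{c'\in\CC'}\supp(c')=G$), so $u$ can be chosen with full support, making $\varphi_u$ injective regardless of $k,k'$. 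If $K=\F_q$ is finite this genericity must be replaced by a counting argument: the number of $u\in\CC'$ whose support misses a fixed information set $S$ of $\CC$ is $q^{k'-|S\cap(\text{coords determined})|}$-type and one shows, using $k\le k'$, that not all $u$ can simultaneously fail; alternatively, and more robustly, one again produces a full-support element of $\CC'$ by taking a suitable $\F_q$-linear combination of basis elements (possible since $\CC'$ is $G$-invariant and hence has full total support). I expect this finite-field bookkeeping to be the main obstacle, and the hypothesis $k\le k'$ is presumably there to make the counting go through in the form the authors intend; I would state it via full-support elements to sidestep the inequality entirely, or follow whatever information-set counting the authors set up.

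For (c): first the $KG$-linearity characterization. Write $w=\sum_{x\in G}w_x x$. For $\varphi_w$ to be $KG$-linear we need $\varphi_w(cg)=\varphi_w(c)g$ for all $c\in\CC$, $g\in G$, i.e. $(cg)*w=(c*w)g$. From the computation in the proof of Lemma~\ref{lemma:first}, $(c*w)g=cg*wg$, so the condition becomes $cg*w=cg*wg$ for all $c\in\CC,g\in G$; ranging $c$ over $\CC$ (whose support is all of $G$ by the argument above) this forces $w=wg$ for every $g\in G$, which says exactly that $w$ lies in the trivial submodule $K_G=\langle\sum_{x\in G}x\rangle_K$; conversely if $w=\alpha\sum_x x$ then $wg=w$ and the same computation gives $KG$-linearity. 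For the injectivity statement, a nonzero $w\in\CC'\cap K_G$ is $w=\alpha\sum_{x\in G}x$ with $\alpha\ne0$, so $\varphi_w(c)=c*w=\alpha c$ for all $c$, which is a bijection, in particular injective. This part is a short direct verification with no real obstacle.
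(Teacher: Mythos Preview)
Parts (a) and (c) are correct and match the paper's argument essentially verbatim: bilinearity gives (a), and for (c) both you and the paper reduce $KG$-linearity to the identity $cg*w=cg*wg$, then use that a nonzero $G$-code has total support $G$ to deduce $w=wg$ for all $g$, i.e.\ $w\in K_G$; the injectivity of $\varphi_w$ for $0\ne w\in K_G$ is immediate since then $\varphi_w$ is a nonzero scalar.

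Part (b) has a genuine gap over finite fields. Your plan is to produce a full-support element $v\in\CC'$, which would make $\varphi_v$ injective regardless of $k,k'$. Over an infinite field this works, but over a finite field such $v$ need not exist, even though $\CC'$ is a nonzero $G$-code with total support $G$. A concrete counterexample: take $K=\F_2$, $G=C_3=\langle g\rangle$, and $\CC'=\ker\varepsilon=\{0,\,1+g,\,g+g^2,\,1+g^2\}$; this is a $2$-dimensional $G$-code in which every nonzero element has weight $2$, so no codeword has full support. Your fallback ``counting argument'' is not spelled out enough to be a proof, and the phrase ``sidestep the inequality entirely'' via full support is precisely what fails here.

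The paper's route for (b) is different and does use $k\le k'$: it fixes an information set $S$ for $\CC$ (of size $k$), arranges that $\CC'$ has an information set $S'\supseteq S$ (this is where $k\le k'$ enters), and then takes $v$ to be the sum of the rows of a generator matrix of $\CC'$ in systematic form on $S'$. Then $v$ has entry $1$ at every position of $S$, so $\varphi_v$ agrees with the identity on the $S$-coordinates; since $S$ is an information set for $\CC$, this forces $\varphi_v$ to be injective. The key point you are missing is that one does not need $\supp(v)=G$, only $\supp(v)\supseteq S$ for \emph{some} information set $S$ of $\CC$.
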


\begin{proof} a) This claim is straightforward.\\
b) 
 Let $S$ be an information set for $\CC$. Since the existence of $v$ as in the claim is invariant under permutation equivalence for $\CC'$, we can assume that $\CC'$ has an information set $S'$ containing $S$. Let $M'$ be a generator matrix for $\CC'$ such that the columns of $M'$ indexed by $S'$ form the identity matrix $I_{k^\prime}$, and choose $v\in\CC'$ as the sum of the rows of $G'$. Then, for any $c\in\CC$ and any position $i$ in $S$, the $i$-th entries of $c$ and $\varphi_v(c)$ are equal. The claim follows. \\
c) Let $w=\sum_{x\in G}\lambda_x x\in\CC'$. If $w\in K_G$, say $\lambda_x=\lambda\in K$ for any $x\in G$, then
$$\varphi_w(cg) = cg*w=cg*wg= (c*w)g = \varphi_w(c)g$$
for all $c\in\CC$ and $g\in G$. Since $\varphi_w$ is $K$-linear, this implies that $\varphi_w$ is $KG$-linear.
Conversely, suppose that $\varphi_w$ is $KG$-linear. Then
$$cg*w =\varphi_w(cg) =  \varphi_w(c)g =
(c*w)g = cg * wg$$
for all $c \in \CC$ and $g \in G$.
Thus $c*w = c *wg$ and hence
$ c*(w-wg)=0$ for all $c \in \CC$ and $g \in G$.
If there exist $x,g\in G$ such that the component of $w -wg$ at $x$ is nonzero, then we choose $c \in \CC$ with nonzero component at $x$ and obtain a contradiction to $c*(w-wg)=0$. Therefore $w=wg$ for all $g\in G$, implying that $\lambda_x=\lambda\in K$ for all $x\in G$, i.e. $w\in K_G$.
Finally, if $w\in\CC'\cap K_G$ and $w\ne0$, then $\wt(w)=|G|$, and therefore $\varphi_w$ is injective.
\end{proof}

In general, not all $KG$-monomorphisms from $\CC$ into $\CC*\CC$ have the shape $\varphi_{u}$. 

\begin{remark} \label{binary}
If $K=\F_2$, then the map $c\mapsto c* c$ is a $KG$-monomorphism from $\CC$ into $\CC*\CC$.
\end{remark}

On the other side, $KG$-monomorphisms from $\CC$ into $\CC*\CC$ may not exist at all, as  Example~\ref{ex:nomono} shows.

\begin{example}\label{ex:nomono}
Let $K=\F_3$, $G=C_2=\langle r\rangle$ and $\CC=(1+2r)KG$. Then $\CC$ is a nontrivial irreducible $KG$-module of dimension $1$. By direct computation we have
$\CC * \CC=(1+r)KG$, which is the trivial $KG$-module. Therefore a $KG$-monomorphism from $\CC$ into $\CC* \CC$ does not exist.
\end{example}

Theorem \ref{thm:induced} provides the structure of those $G$-codes $\mathcal{C}$ which coincide with $\CC*\CC$.

\begin{theorem} \label{thm:induced}
Let $\CC\leq KG$ be a $G$-code with $\CC\ne\{0\}$.
 If $\CC=\CC * \CC$, then there exists a subgroup $H\leq G$ such that $\CC=K_H^G$. In particular, $d(\CC)\cdot \dim(\CC)=|G|$.
\end{theorem}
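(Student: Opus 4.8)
The plan is to show first that $\CC$ is generated by an idempotent, then use the Schur self-product condition to pin down that idempotent, and finally identify the resulting ideal as an induced trivial module. Here is the reasoning in order. If $\CC=\CC*\CC$, then in particular $\dim\CC*\CC=\dim\CC$, so the bound \eqref{eq:BoundDim} forces $\dim\CC\le\binom{\dim\CC+1}{2}$, which is automatic, but more usefully we may iterate: the all-ones vector $\mathbf{1}=\sum_{g\in G}g$ plays a distinguished role, since for any $c=\sum c_x x\in\CC$ one has, after suitable scaling, information about the idempotent structure. Concretely, I would argue that $\CC=\CC*\CC$ implies $\CC$ contains a nonzero idempotent $e$ with $\CC=eKG$: the subalgebra perspective is that $\CC$, being a right ideal closed under the componentwise product, behaves like the "function algebra" of its support, and in characteristic zero or coprime characteristic this is transparent, while in the modular case one works with the structure of $KG$ directly. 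The cleanest route is probably: the identity element $\sum_{g\in G}g$ of the Schur product is $\mathbf{1}_*:=\sum_{g\in G}g$ (the all-ones word), and $\CC=\CC*\CC$ together with finite-dimensionality gives an idempotent $e\in\CC$ under $*$ that is also a unit for $\CC$ under $*$; since the Schur product is just pointwise multiplication of coefficient vectors, such an $e$ must have all its nonzero coefficients equal to $1$, i.e. $e=\sum_{h\in S}h$ for $S:=\supp(e)$.

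Next I would exploit that $\CC=eKG$ is a \emph{right ideal} of $KG$, while $e$ is the $0/1$-indicator of $S=\supp(e)\subseteq G$. For any $g\in G$, $eg\in\CC$, and $\supp(eg)=Sg$. Since $e$ is the $*$-identity on $\CC$, we get $e*(eg)=eg$, which says $Sg\subseteq S$, hence $Sg=S$ for all $g\in G$; taking $g\in S^{-1}S$-type elements and using $1\in S$ (which we may arrange by replacing $e$ with $h^{-1}e$ for $h\in S$, noting this is again in $\CC$ and again a $0/1$-word that is the $*$-identity) shows $S$ is closed under multiplication, so $S=:H$ is a subgroup of $G$ with $e=\sum_{h\in H}h=\widehat{H}$. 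Then $\CC=\widehat{H}KG$. Finally $\widehat{H}$ spans the trivial $KH$-module inside $KH$ (it is fixed by right multiplication by $H$), so $\widehat{H}KG=\bigoplus_{i=1}^{[G:H]}\widehat{H}g_i$ is exactly the induced module $K_H^G$. The dimension count $\dim\CC=[G:H]$ and $d(\CC)=\wt(\widehat{H})=|H|$ (the minimum weight is attained on $\widehat{H}$ itself, since any smaller-weight element would violate Corollary~\ref{coro:bound}) give $d(\CC)\cdot\dim\CC=|H|\cdot[G:H]=|G|$, matching Theorem~\ref{th:lower}.

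The main obstacle is the first step: producing the $0/1$-idempotent $e$ with $\CC=eKG$ from the bare hypothesis $\CC=\CC*\CC$, in arbitrary characteristic. In coprime characteristic $\CC$ is generated by an idempotent automatically (semisimplicity), but that idempotent need not be a $0/1$-word a priori — one must use the $*$-closure to upgrade it. In the modular case even the existence of an idempotent generator is not free. The trick I expect to need is to work with a $*$-idempotent: the finite-dimensional commutative associative $K$-algebra $(\CC,*)$ has $\CC=\CC*\CC=\CC^{*2}=\CC^{*3}=\cdots$, so it is a \emph{unital} algebra (a finite-dimensional algebra equal to its own square is unital), and its unit $e$ is a $*$-idempotent supported on some $S$ with all nonzero coefficients $1$; one then checks separately that this same $e$ satisfies $\CC=eKG$ by showing $eKG\subseteq\CC$ (clear, as $e\in\CC$ and $\CC$ is a right ideal) and $\CC\subseteq eKG$ (because $e$ is the $*$-unit, every $c\in\CC$ equals $e*c$, and from $Sg=S$ one deduces $e$ acts as a two-sided-type unit making $c\in eKG$ — this is the delicate point where one uses that $S$ will turn out to be a subgroup). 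Once $S$ is shown to be a subgroup the rest is bookkeeping, but isolating that argument cleanly — particularly deriving $c=ec$ in $KG$ (not just $c=e*c$) from the $*$-unit property — is where care is required.
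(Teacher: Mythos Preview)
Your approach has a genuine and fatal gap at exactly the point you yourself flag as ``delicate.'' The $*$-unit $e$ of $(\CC,*)$ does exist (one clean justification: $(\CC,*)$ is a subalgebra of the reduced commutative algebra $(K^{|G|},*)$, hence reduced, hence semisimple, hence unital), and $e$ is indeed a $0/1$-vector. But for a nonzero right ideal $\CC\leq KG$, every $g\in G$ lies in $\supp(c)$ for some $c\in\CC$ (translate any nonzero codeword), so the condition $e*c=c$ for all $c\in\CC$ forces $e_g=1$ for every $g\in G$; that is, $e=\sum_{g\in G}g$ and $S=G$ \emph{always}. Your own computation $e*(eg)=eg\Rightarrow Sg\subseteq S$ already shows this, since it holds for every $g\in G$. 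Consequently $eKG=\big(\sum_{g\in G}g\big)KG=K_G$ is one-dimensional, and the step $c=e*c\Rightarrow c=ec$ (convolution product) that you hoped to establish simply fails whenever $\dim\CC>1$: for $e=\sum_{g}g$ one has $ec=\varepsilon(c)\sum_{g}g$, not $c$. So the $*$-unit carries no information about the subgroup $H$ and cannot generate $\CC$ as a $KG$-ideal.

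The paper's argument avoids this trap by working not with the $*$-unit but with \emph{minimum-weight} codewords. From $\CC*\CC\subseteq\CC$ one sees that two minimum-weight codewords have either equal or disjoint supports; transitivity of $G$ on itself then yields a partition $G=\bigsqcup_i\supp(c_i)$, and the block $O_1\ni 1$ is shown to be a subgroup $H$ via $O_1h=O_1$ for $h\in O_1$. The Schur condition is used again to force the coefficients of each $c_i$ to be constant, giving $K_H^G\subseteq\CC$; the reverse inclusion comes from writing any $a\in\CC$ as $a=a*\sum_i c_i=\sum_i a*c_i$ and observing each summand is a scalar multiple of $c_i$. If you want to salvage your idempotent viewpoint, the right objects to look at are the \emph{primitive} $*$-idempotents of $(\CC,*)$ rather than its unit: these are precisely the (normalized) minimum-weight codewords $c_i$, and the argument then merges with the paper's.
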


\begin{proof}
Any two codewords $c,c'\in\CC$ of minimum weight $\wt(c)=\wt(c')=d(\CC)$ satisfy 
\begin{equation}\label{eq:disjoint}
\supp(c)\cap \supp(c')=\emptyset\qquad\mbox{or}\qquad \supp(c)\cap\supp(c')= \supp(c),
\end{equation}
because $c*c'\in\CC$ and $\supp(c*c')\subseteq\supp(c)$. 

Moreover, every element of $G$ is in the support of some minimum weight codeword of $\CC$, because the multiplication action of $G$ on itself is transitive and $\CC$ is a $KG$-module.
Therefore, there exist $c_1,\ldots,c_s\in\CC$ of minimum weight such that $G$ is the disjoint union 
$$G=\supp(c_1)\sqcup  \cdots \sqcup \supp(c_s).$$

Let $O_i=\supp(c_i)$ for $i=1,\ldots,s$, and assume without loss of generality that $1\in O_1$.
Then, for any $g\in O_1$, the set $O_1g$ is the support of the minimum weight codeword $c_1g$ and satisfies $g\in O_1\cap O_1g$, which implies $O_1=O_1g$ by \eqref{eq:disjoint}. 
Thus $O_1=O_1O_1$, and hence $H:=O_1$ is a subgroup of $G$. 

For any $i=1,\ldots,s$, it follows that $O_i=Hg_i$ for some $g_i\in G$ (indeed, for $g_i\in O_i)$. 

Write $c_i=\sum_{h\in H}\lambda_h^{(i)} hg_i$, where $\lambda_h^{(i)}\not=0$ for all $h\in H$.
For any $h^\prime \in H$, the codeword $c_i* c_i-\lambda_{h^\prime}^{(i)} c_i\in\CC$ has weight $|\{h\in H\colon \lambda_h^{(i)}\ne\lambda_{h^\prime}^{(i)}\}|<\wt(c_i)=d(\CC)$, and hence $c_i* c_i-\lambda_{h^\prime}^{(i)} c_i=0$.
Then $\lambda_h^{(i)}=\lambda^{(i)}$ is constant for all $h\in H$ and, after scaling by $(\lambda^{(i)})^{-1}\in K$, we can assume that $c_i=\sum_{h\in H}h g_i$.
It follows that 
$$\left(K_H\right)^G=\bigoplus_{i=1}^s\left(K\sum_{h\in H}h\right)g_i=\bigoplus_{i=1}^s K\left(\sum_{h\in H}h g_i\right)=\bigoplus_{i=1}^s K c_i\subseteq \CC.$$
Conversely, in order to prove that $\CC\subseteq K_H^G$, let $a\in\CC$. Then $$a=a*\sum_{g\in G}g=a*\sum_{i=1}^s c_i=\sum_{i=1}^s a* c_i=\sum_{i=1}^s a_i,$$
where $a_i:=a*c_i\in KHg_i$.
From $\CC * \CC\subseteq\CC$ it follows $a_i\in\CC$. Since $\supp(a_i)\subseteq\supp(c_i)$, this implies that either $a_i=0$ or $\wt(a_i)=d(\CC)$. Then, arguing as above, we have $a_i=\mu^{(i)} c_i$ for some $\mu^{(i)} \in K$.
Hence $a=\sum_{i=1}^s \mu^{(i)}c_i\in K_H^G$ and the equality $\CC=K_H^G$ is proved.

Finally, the claim $d(\CC)\cdot \dim(\CC)=|G|$ follows immediately from Theorem \ref{th:lower}.
\end{proof}

For any $G$-code $\CC \leq KG$, we define recursively $\CC^{(1)}=\CC$ and $\CC^{(t+1)}=\CC^{(t)}*\CC$.
By \cite[Theorem 2.32]{MR3364442}, we know that
$\dim \CC^{(t+1)} \geq \dim \CC^{(t)}$ for any $t \geq 1$.
This allows to define the \emph{Castelnuovo-Mumford regularity} of $\CC$ as the smallest $t$ such that $\dim\CC^{(t+i)}=\dim\CC^{(t)}$ for all $i\geq0$; see \cite[Definition 1.5]{MR3364442}. 
The eventual behaviour of $\CC^{(t)}$ is easily obtained in the binary case.

\begin{theorem}\label{thm:regularity}
Let $K=\F_2$ and $\CC \leq KG$ be a $G$-code with $\CC\ne\{0\}$.
Then there exists a subgroup $H\leq G$ such that $\CC\leq K_H^G$ and $\CC^{(t)}=K_H^G$ for any $t$ big enough.
\end{theorem}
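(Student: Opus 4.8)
The plan is to exploit the binary setting, where the Schur square behaves especially well because squaring is the identity on coordinates: for $c=\sum_{x\in G}c_x x\in\F_2 G$ we have $c*c=\sum_{x\in G}c_x^2 x=\sum_{x\in G}c_x x=c$, so every codeword is idempotent under the Schur product. Consequently each chain $\CC^{(1)}\subseteq\CC^{(2)}\subseteq\cdots$ is nested (already noted before the statement via \cite[Theorem 2.32]{MR3364442}), weakly increasing in dimension, and lives inside the finite-dimensional space $KG$; hence it stabilizes. Let $\mathcal{D}:=\CC^{(t_0)}$ be the stable term, so $\mathcal{D}=\mathcal{D}*\CC$. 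I first want to upgrade this to $\mathcal{D}=\mathcal{D}*\mathcal{D}$, which will let me invoke Theorem \ref{thm:induced} on $\mathcal{D}$.

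To get $\mathcal{D}*\mathcal{D}=\mathcal{D}$: since $\CC\subseteq\mathcal{D}$ and $\mathcal{D}*\mathcal{D}$ is spanned by products $d*d'$ with $d,d'\in\mathcal{D}=\CC^{(t_0)}$, and each such $d,d'$ is a sum of iterated Schur products of elements of $\CC$, a product $d*d'$ is a sum of products of $2t_0$ elements of $\CC$, i.e. lies in $\CC^{(2t_0)}=\CC^{(t_0)}=\mathcal{D}$ (using associativity and bilinearity of $*$, plus that the chain is constant from $t_0$ on). So $\mathcal{D}*\mathcal{D}\subseteq\mathcal{D}$; the reverse inclusion holds because $d=d*d\in\mathcal{D}*\mathcal{D}$ for every $d\in\mathcal{D}$ (again using $d*d=d$ in characteristic $2$). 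Therefore $\mathcal{D}=\mathcal{D}*\mathcal{D}$ and $\mathcal{D}\ne\{0\}$ since $\{0\}\ne\CC\subseteq\mathcal{D}$. By Theorem \ref{thm:induced} there is a subgroup $H\leq G$ with $\mathcal{D}=K_H^G$. Finally $\CC\subseteq\CC^{(t_0)}=\mathcal{D}=K_H^G$, and $\CC^{(t)}=\CC^{(t_0)}=K_H^G$ for all $t\geq t_0$, which is exactly the assertion ``for any $t$ big enough''.

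The one point requiring care — and the likeliest spot for a slip — is the bookkeeping that $\mathcal{D}*\mathcal{D}\subseteq\CC^{(2t_0)}$: I need that the Schur product of a sum of $t_0$-fold products is again a sum of $(\le 2t_0)$-fold products, and that all of $\CC^{(1)},\dots,\CC^{(2t_0)}$ already equal $\mathcal{D}$ once the chain has stabilized at $t_0$ (so I am not secretly using a term beyond the stabilization without justification — but of course everything $\ge t_0$ is the same, so this is fine). An alternative, slicker route avoiding this: once $\mathcal{D}=\mathcal{D}*\CC$, note $\mathcal{D}*\mathcal{D}\supseteq\mathcal{D}*\CC=\mathcal{D}$ and $\mathcal{D}*\mathcal{D}\subseteq$ (smallest $G$-code containing all $d*d'$) — but to bound this above by $\mathcal{D}$ I still ultimately need the iterated-product observation, so the direct argument above is cleanest. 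I would present it in that order: reduce to a stabilizing nested chain, identify the stable term $\mathcal{D}$, prove $\mathcal{D}=\mathcal{D}*\mathcal{D}$ using the characteristic-$2$ idempotency together with associativity of $*$, apply Theorem \ref{thm:induced}, and conclude.
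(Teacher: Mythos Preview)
Your proof is correct and follows essentially the same strategy as the paper: use that $c*c=c$ over $\F_2$ to obtain a nested, eventually stabilizing sequence whose limit $\mathcal{D}$ satisfies $\mathcal{D}*\mathcal{D}=\mathcal{D}$, then apply Theorem~\ref{thm:induced}. The only difference is a matter of packaging: the paper works along the dyadic subsequence $\CC^{(2^i)}$, so that stabilization immediately reads $\CC^{(2^{t+1})}=\CC^{(2^t)}*\CC^{(2^t)}=\CC^{(2^t)}$ and no upgrade from $\mathcal{D}*\CC=\mathcal{D}$ to $\mathcal{D}*\mathcal{D}=\mathcal{D}$ is needed; your route along the full chain requires the (easy) extra step $\mathcal{D}*\mathcal{D}\subseteq\CC^{(2t_0)}=\mathcal{D}$. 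One small inaccuracy: the reference to \cite[Theorem~2.32]{MR3364442} before the statement only gives $\dim\CC^{(t+1)}\geq\dim\CC^{(t)}$, not the inclusion $\CC^{(t)}\subseteq\CC^{(t+1)}$; the inclusion really comes from your idempotency observation (e.g.\ $c_1*\cdots*c_t=(c_1*\cdots*c_t)*c_t\in\CC^{(t+1)}$), so you should cite that rather than the paper's remark.
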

\begin{proof}
By Remark \ref{binary}, we know that $ \CC^{(2^i)} \leq \CC^{(2^{i+1})}$ for any $\geq0$.
Since $G$ is finite, there exists $t\in\mathbb{N}$ such that $\CC^{(2^{t})}*\CC^{(2^{t})}=\CC^{(2^{t+1})}=\CC^{(2^{t})}$.
By Theorem \ref{thm:induced}, this implies that $\CC^{(2^t)}=K_H^G$ for some $H\leq G$.
Note that the case $\CC^{(2^t)}=K G$ corresponds to $H=1$.
\end{proof}

In the next results we investigate the $KG$-module $\CC*\CC$ in relation with the self-orthogonality of $\CC$.
Notice that the standard inner product on $K^n\times K^n$ corresponds via a standard isomorphism $K^n\cong KG$ to the inner product on $KG\times KG$ defined by
\[
\langle\cdot,\cdot\rangle : KG\times KG \to K,\quad (c,c')\mapsto \varepsilon(c*c'),
\]
where $\varepsilon$ is the $KG$-linear augmentation defined by
\[
\varepsilon : KG\to K,\quad \sum_{g\in G}a_g g\mapsto \sum_{g\in G}a_g.
\]
Therefore, a $G$-code $\CC\leq KG$ is self-orthogonal if and only if $\CC*\CC\subseteq \ker\varepsilon$.

\begin{theorem}\label{thm:Schur}
Let $\CC\leq KG$ be a $G$-code such that $\CC\nsubseteq \CC^\perp$. Then
$\CC * \CC = \mathcal{P}_0 \oplus \mathcal{M}$, where $\mathcal{P}_0$ and $\mathcal{M}$ are $KG$-modules, and
$\mathcal{P}_0$ is 
a projective cover of the trivial $KG$-module $K_G$. 
In particular, if ${\rm char}K=p$, then $|G|_p\leq \dim \CC * \CC$.
\end{theorem}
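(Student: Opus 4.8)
The plan is to produce, inside $\CC*\CC$, a submodule isomorphic to the projective cover $\mathcal{P}_0$ of the trivial module $K_G$, and then split it off as a direct summand by exploiting that $KG$ is self-injective. Throughout, let $\varepsilon\colon KG\to K$ denote the augmentation, which is an algebra homomorphism, and let $\sigma=\sum_{g\in G}g\in KG$; a direct check gives $h\sigma=\varepsilon(h)\sigma$ for every $h\in KG$, so $K\sigma$ is a one-dimensional, hence simple, submodule of $KG$. As recorded just before the statement, the hypothesis $\CC\nsubseteq\CC^\perp$ is equivalent to $\CC*\CC\nsubseteq\ker\varepsilon$, so I may fix $x\in\CC*\CC$ with $\varepsilon(x)=1$; recall also that $\CC*\CC$ is a right ideal of $KG$ by Lemma~\ref{lemma:first}.

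First I would realize $\mathcal{P}_0$ concretely as a right ideal. Expressing $1$ as a sum of primitive orthogonal idempotents and applying $\varepsilon$, the idempotent values $\varepsilon(f_j)\in\{0,1\}$ add up to $1$, so $\varepsilon(f_0)=1$ for some primitive idempotent $f_0$; put $\mathcal{P}_0:=f_0KG$. Since $\operatorname{rad}(\mathcal{P}_0)=f_0\operatorname{rad}(KG)\subseteq\operatorname{rad}(KG)\subseteq\ker\varepsilon$ whereas $\varepsilon(f_0)\neq 0$, the nonzero map $\varepsilon|_{\mathcal{P}_0}$ factors through the simple head of $\mathcal{P}_0$, forcing $\operatorname{head}(\mathcal{P}_0)\cong K_G$; thus $\mathcal{P}_0$ is a copy of the projective cover of $K_G$. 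The point of this choice is that $\sigma=f_0\sigma\in\mathcal{P}_0$. Now $KG$ is a symmetric algebra, hence self-injective, so the indecomposable projective module $\mathcal{P}_0$ is also indecomposable injective and therefore has a simple socle; since $K\sigma$ is a simple submodule of $\mathcal{P}_0$ we get $\operatorname{soc}(\mathcal{P}_0)=K\sigma$, so in particular every nonzero submodule of $\mathcal{P}_0$ contains $\sigma$.

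Then I would consider the right-$KG$-linear map $\rho\colon\mathcal{P}_0\to KG$ given by $h\mapsto xh$, whose image $x\mathcal{P}_0$ lies in $\CC*\CC$ because $\CC*\CC$ is a right ideal containing $x$. If $\ker\rho$ were nonzero it would contain $\sigma$, giving the contradiction $0=x\sigma=\varepsilon(x)\sigma=\sigma\neq 0$; hence $\rho$ is injective and $\rho(\mathcal{P}_0)\cong\mathcal{P}_0$ is a projective --- thus, by self-injectivity again, injective --- $KG$-submodule of $\CC*\CC$. An injective submodule is a direct summand, so $\CC*\CC=\rho(\mathcal{P}_0)\oplus\mathcal{M}\cong\mathcal{P}_0\oplus\mathcal{M}$ for some $KG$-module $\mathcal{M}$, with $\mathcal{P}_0$ a projective cover of $K_G$. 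For the last assertion, when ${\rm char}\,K=p$ Dickson's theorem \cite[Chapter 7, Corollary 7.16]{HB} gives $|G|_p\mid\dim\mathcal{P}_0$ for the nonzero projective module $\mathcal{P}_0$, so $|G|_p\leq\dim\mathcal{P}_0\leq\dim\CC*\CC$.

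I expect the main obstacle to be precisely the production of a genuine copy of $\mathcal{P}_0$ --- rather than merely of $K_G$ --- inside $\CC*\CC$. The obvious attempt, lifting the surjection $\varepsilon\colon\CC*\CC\to K_G$ through the projective cover $\mathcal{P}_0\to K_G$, only yields a map $\mathcal{P}_0\to\CC*\CC$ that need not be injective and so is useless here; the device above circumvents this by using right multiplication by the specific $x$ on the concretely placed ideal $\mathcal{P}_0=f_0KG$, where the socle generator $\sigma$ cannot be annihilated exactly because $\varepsilon(x)\neq 0$. The only structural fact used beyond the preliminaries is the self-injectivity of $KG$, invoked both to pin down $\operatorname{soc}(\mathcal{P}_0)$ and to split off the copy of $\mathcal{P}_0$ at the end.
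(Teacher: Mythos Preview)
Your proof is correct. It is, however, a genuinely different (in fact dual) argument from the one in the paper. The paper fixes a decomposition $KG=\mathcal{P}_0\oplus\mathcal{P}_1$ and looks at the \emph{projection} $\pi\colon \CC*\CC\to\mathcal{P}_0$; using $\ker\varepsilon=\mathcal{P}_0\,{\rm J}(KG)\oplus\mathcal{P}_1$, the element $c\in(\CC*\CC)\setminus\ker\varepsilon$ has $c_0\notin\mathcal{P}_0\,{\rm J}(KG)$, hence $c_0KG=\mathcal{P}_0$ and $\pi$ is onto; projectivity of $\mathcal{P}_0$ then splits this surjection. You instead produce an \emph{injection} $\mathcal{P}_0\hookrightarrow\CC*\CC$ by left multiplication $h\mapsto xh$, detect injectivity on the simple socle $K\sigma$ via $x\sigma=\varepsilon(x)\sigma\neq0$, and split using that $KG$ is self-injective so that $\mathcal{P}_0$ is also injective. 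The paper's route is slightly leaner in that it only uses projectivity, while yours additionally invokes the symmetry/self-injectivity of $KG$; on the other hand your argument exhibits an explicit copy $xf_0KG$ of $\mathcal{P}_0$ inside $\CC*\CC$, which the paper's projection argument does not.
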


\begin{proof}
Write $KG=\mathcal{P}_0\oplus \mathcal{P}_1$,
where $\mathcal{P}_0$ is
a projective cover of $K_G$. 
For all $a\in KG$ denote by $a_0$ and $a_1$ the components of $c$ in $\mathcal{P}_0$ and $\mathcal{P}_1$, respectively, and consider the projection $\pi:\CC*\CC\to\mathcal{P}_0$ defined by $a\mapsto a_0$. We prove that $\pi$ is $KG$-linear and surjective.

Note that $\ker \varepsilon =\mathcal{P}_0{\rm J}(KG)\oplus \mathcal{P}_1$, where ${\rm J}(KG)$ is the Jacobson radical of $KG$; see \cite[Chapter 7,\ \S  10]{HB}.
Also, since $\CC\nsubseteq \CC^\perp$, there exists $c\in(\CC*\CC)\setminus\ker\varepsilon$, which satisfies $c_0=\pi(c)\in\mathcal{P}_0\setminus\ker\varepsilon$.
Therefore $c_0\in\mathcal{P}_0\setminus\mathcal{P}_0{\rm J}(KG)$, and we get $\mathcal{P}_0=c_0 KG$.
Thus $\pi$ is a $KG$-epimorphism from $\CC * \CC$ onto the projective $KG$-module $\mathcal{P}_0$.  Hence, up to a $KG$-isomorphism, $\mathcal{P}_0$ is a direct summand of $\CC * \CC$; see \cite[Chapter 7, \S 7]{HB}.

The claim on $|G|_p$ now follows by $\dim\mathcal{P}_0\leq\dim\CC*\CC$ and Dickson's Theorem; see \cite[Chapter 7, Corollary 7.16]{HB}.
\end{proof}

Proposition \ref{prop:solv} gives an application of Theorem \ref{thm:Schur} to $G$-codes over the binary field.

\begin{proposition}\label{prop:solv}
Let $K=\F_2$ and $\CC\leq\F_2 G$ be a $G$-code such that $\CC\not\subseteq\CC^{\perp}$.
Suppose that the projective cover $\mathcal{P}_0$ of the trivial $KG$-module is 
$K_H^G$ for some subgroup $H\leq G$; for instance, this holds if $H$ is a normal $p$-complement of $G$.
Then $\CC*\CC=\mathcal{P}_0$ if and only if $\CC\leq\mathcal{P}_0$.
\end{proposition}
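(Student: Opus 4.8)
The plan is to prove the two implications separately. The forward direction (if $\CC*\CC=\mathcal{P}_0$ then $\CC\leq\mathcal{P}_0$) is the easy one: since $\CC$ embeds into $\CC*\CC$ by Remark \ref{binary} (the map $c\mapsto c*c$ is a $KG$-monomorphism in the binary case), we have a $KG$-monomorphism $\CC\hookrightarrow\CC*\CC=\mathcal{P}_0=K_H^G$. Now $K_H^G$ is a \emph{uniserial-like} object in the sense that matters here: its only submodule with trivial head is itself, and more usefully, as the projective cover of the simple module $K_G$ it has a unique maximal submodule. But I cannot directly conclude $\CC\leq\mathcal{P}_0$ from an abstract embedding, since $\CC$ is a concrete ideal. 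Instead I would argue with the Schur product directly: if $\CC*\CC=\mathcal{P}_0$, then in particular $\CC*\CC\subseteq\mathcal{P}_0=K_H^G$, and since every $c\in\CC$ satisfies $c*c=c$ over $\F_2$ (idempotent componentwise, as $a^2=a$ in $\F_2$), we get $c=c*c\in\CC*\CC=\mathcal{P}_0$ for every $c\in\CC$, hence $\CC\leq\mathcal{P}_0$.

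For the converse, suppose $\CC\leq\mathcal{P}_0=K_H^G$ and $\CC\not\subseteq\CC^\perp$. By Theorem \ref{thm:Schur}, $\CC*\CC=\mathcal{P}_0'\oplus\mathcal{M}$ where $\mathcal{P}_0'$ is a projective cover of $K_G$; since projective covers are unique up to isomorphism and, by hypothesis, $\mathcal{P}_0=K_H^G$ is \emph{the} projective cover, we may take $\mathcal{P}_0'=\mathcal{P}_0$ as a direct summand of $\CC*\CC$, so $\dim\CC*\CC\geq\dim\mathcal{P}_0$. On the other hand, from Lemma \ref{lemma:first} $\CC*\CC$ is a $G$-code, and it is spanned by elements $c*c'$ with $c,c'\in\CC\leq K_H^G$; since $K_H^G$ is itself closed under the Schur product (it is spanned by the supports $Hg_i$, and the componentwise product of two such $\F_2$-vectors is again supported on a union of cosets $Hg_i$ — indeed one should check $\mathbf{1}_{Hg_i}*\mathbf{1}_{Hg_j}$ equals $\mathbf{1}_{Hg_i}$ if $Hg_i=Hg_j$ and $0$ otherwise), we get $\CC*\CC\subseteq K_H^G=\mathcal{P}_0$. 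Combining $\mathcal{P}_0\hookrightarrow\CC*\CC\subseteq\mathcal{P}_0$ forces $\CC*\CC=\mathcal{P}_0$.

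The step I expect to be the main obstacle is making rigorous the claim that $K_H^G$ is closed under the Schur product, i.e. that $\mathcal{P}_0=K_H^G$ is a \emph{sub-algebra} of $KG$ with respect to $*$ \emph{in the chosen coordinatization}. The subtlety is that $\mathcal{P}_0$ is only defined up to $KG$-isomorphism, but Schur product depends on the concrete standard isomorphism $\varphi:KG\to K^{|G|}$; one must use the specific realization $\mathcal{P}_0=K_H^G=\left(\sum_{h\in H}h\right)KG$, whose elements are exactly the $\F_2$-linear combinations of the coset indicators $\mathbf{1}_{Hg_1},\dots,\mathbf{1}_{Hg_t}$, and observe that these indicators are pairwise Schur-orthogonal idempotents partitioning $G$. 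That computation is elementary but is the crux; once it is in place, the double inclusion argument closes immediately. A secondary point to handle carefully is the uniqueness of the projective cover as a summand: I would invoke the Krull–Schmidt theorem together with \cite[Chapter 7, \S 7]{HB} to identify the summand $\mathcal{P}_0'$ of $\CC*\CC$ with $K_H^G$ on the nose, rather than merely up to isomorphism, using that it is a \emph{submodule} of $K_H^G$ isomorphic to it and $K_H^G$ is indecomposable.
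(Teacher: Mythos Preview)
Your proposal is correct and follows essentially the same route as the paper: the forward direction via $c*c=c$ over $\F_2$ (which is exactly Remark~\ref{binary}), and the converse via the Schur-closure of $K_H^G$ together with Theorem~\ref{thm:Schur}. Your ``secondary point'' about Krull--Schmidt is unnecessary: once you have $\CC*\CC\subseteq\mathcal{P}_0$ and, by Theorem~\ref{thm:Schur}, a submodule of $\CC*\CC$ of dimension $\dim\mathcal{P}_0$, the inclusion of finite-dimensional spaces already forces $\CC*\CC=\mathcal{P}_0$; no ``on the nose'' identification of the summand is needed.
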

\begin{proof}
 By Remark \ref{binary}, $\CC$ is a submodule of $\CC*\CC$.
 Therefore, $\CC*\CC=\mathcal{P}_0$ implies $\CC\leq\mathcal{P}_0$.
 If $G$ is $2$-nilpotent and $H$ is a normal $2$-complement of $G$, then $\mathcal{P}_0=K_H^G$ because ${\rm char}K=2$; see \cite[Chapter 7, \S 7]{HB}.
 Therefore we can suppose that $\mathcal{P}_0=K_H^G$ for some subgroup $H\leq G$. 
Since $K=\F_2$, it is easy to see that $\mathcal{P}_0*\mathcal{P}_0=\mathcal{P}_0$, whenever $\mathcal{P}_0=K_H^G$ with $H\leq G$.
Therefore, from $\CC\leq\mathcal{P}_0$ it follows that $\CC*\CC\leq\mathcal{P}_0*\mathcal{P}_0=\mathcal{P}_0$.
By Theorem \ref{thm:Schur}, $\CC * \CC$ contains a module
isomorphic to $\mathcal{P}_0$. Thus, from $\CC\leq\mathcal{P}_0$ it follows $\CC * \CC = \mathcal{P}_0$, and the claim is proved.
\end{proof}

 \begin{example}\label{ex:mathieu}
 Let $K=\F_2$ and $G$ be the Mathieu group ${\rm M}_{11}$, of order $2^4 \cdot 3^2 \cdot 5 \cdot 11$. Let $\CC$ be the projective cover $\mathcal{P}_0$ of the trivial $KG$-module, which satisfies $\dim \CC =2^4 \cdot 7$.
 Since $\dim\CC\nmid |{\rm M}_{11}|$, $\CC$ is not induced by the trivial module of a subgroup.
 By Theorem \ref{thm:induced}, we get $\CC \ne \CC * \CC$.
 Also, by direct checking, $\CC\not\subseteq\CC^{\perp}$.
Therefore, the claim of Proposition \ref{prop:solv} does not hold for $G={\rm M}_{11}$.
 \end{example}

\begin{lemma}\label{lemma:self}
If $\CC\leq KG$ is a self-orthogonal $G$-code, then $\dim\CC * \CC <|G|$.
\end{lemma}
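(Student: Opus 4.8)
The plan is to show that self-orthogonality forces $\CC * \CC$ to lie inside the augmentation ideal $\ker\varepsilon$, which has dimension $|G|-1$, giving the strict inequality immediately. Recall from the discussion preceding Theorem~\ref{thm:Schur} that the standard inner product on $KG$ satisfies $\langle c, c'\rangle = \varepsilon(c * c')$, and that a $G$-code $\CC$ is self-orthogonal precisely when $\CC * \CC \subseteq \ker\varepsilon$. So the first step is simply to invoke this equivalence: from $\CC \subseteq \CC^\perp$ we get $\varepsilon(c*c') = \langle c, c'\rangle = 0$ for all $c, c' \in \CC$, hence every generator $c * c'$ of the $K$-span $\CC * \CC$ lies in $\ker\varepsilon$, and therefore $\CC * \CC \subseteq \ker\varepsilon$.

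The second step is to note that $\varepsilon: KG \to K$ is a nonzero $K$-linear map (for instance $\varepsilon(1_G) = 1 \neq 0$), so $\ker\varepsilon$ is a hyperplane in $KG$, i.e. $\dim_K \ker\varepsilon = |G| - 1$. Combining with the inclusion from the first step yields
\[
\dim_K \CC * \CC \;\leq\; \dim_K \ker\varepsilon \;=\; |G| - 1 \;<\; |G|,
\]
which is exactly the claim. There is essentially no obstacle here: the work has all been done in setting up the augmentation-form description of the inner product, and this lemma is a one-line consequence. If one prefers to avoid citing that description, one can argue directly: writing $c = \sum_g c_g g$ and $c' = \sum_g c'_g g$, self-orthogonality gives $\sum_g c_g c'_g = \langle c, c'\rangle = 0$, and $\sum_g c_g c'_g$ is precisely $\varepsilon(c * c')$, so again $c*c' \in \ker\varepsilon$ for all $c, c' \in \CC$ and the span $\CC * \CC$ sits inside the proper subspace $\ker\varepsilon$.

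The only point worth a word of care is that $\CC * \CC$ is \emph{spanned} by the products $c * c'$ rather than equal to the set of such products; but since $\ker\varepsilon$ is a $K$-subspace and contains all the spanning elements, it contains their span, so the bound passes to $\CC * \CC$ without difficulty. Hence the lemma follows, and I would expect the written proof to be two or three sentences long.
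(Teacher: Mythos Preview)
Your proof is correct and follows essentially the same approach as the paper: both show that self-orthogonality gives $\CC*\CC\subseteq\ker\varepsilon$, which is a proper subspace of $KG$. The only minor difference is that the paper invokes the decomposition $\ker\varepsilon=\mathcal{P}_0\,{\rm J}(KG)\oplus\mathcal{P}_1$ from the proof of Theorem~\ref{thm:Schur} to see that $\ker\varepsilon\subsetneq KG$, whereas you argue more directly that $\varepsilon$ is a nonzero linear functional (since $\varepsilon(1_G)=1$), so its kernel is a hyperplane of dimension $|G|-1$; your version is slightly more elementary but the substance is identical.
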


\begin{proof}
From $\CC\subseteq \CC^\perp$ it follows that $\CC*\CC\leq\ker\varepsilon$.
With the notations of the proof of Theorem \ref{thm:Schur}, we have that $\ker\varepsilon =\mathcal{P}_0{\rm J}(KG)\oplus \mathcal{P}_1$ is strictly contained in $KG$, and the same holds for $\CC*\CC$. The claim follows.
\end{proof}

\begin{example}
Consider again the binary $G$-codes introduced in Example \ref{ex:golayandRM}.

If $\CC$ is the extended binary Golay code, then $\CC * \CC=\ker \varepsilon$, so that $\dim \CC * \CC=|G|-1=23$.

If $\CC$ is the Reed-Muller code ${\rm RM}(r, m)$ of order $r$ in $m$ variables with $r\leq (m-1)/2$, then $\CC$ is self-orthogonal. It is easy to observe that 
$\CC * \CC={\rm RM}(2r, m)$. In this case $\CC * \CC<\ker\varepsilon$ whenever $r<(m-1)/2$.
\end{example}

In the case of $p$-groups in characteristic $p$, the converse of Lemma \ref{lemma:self} holds. 

\begin{proposition}\label{prop:selfP}
Let ${\rm char}K=p$, let $G$ be a $p$-group and let $\CC\leq KG$ be a $G$-code.
If $\CC$ is not self-orthogonal, then $\CC*\CC=KG$.
\end{proposition}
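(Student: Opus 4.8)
The plan is to exploit the very special structure of $KG$ when $G$ is a $p$-group and $\mathrm{char}\,K=p$: here $KG$ is a local ring, its unique maximal (two-sided) ideal is the Jacobson radical $\mathrm{J}(KG)=\ker\varepsilon$, and the only indecomposable projective module is $KG$ itself, which is the projective cover of the trivial module $K_G$. First I would invoke Theorem \ref{thm:Schur}: since $\CC$ is not self-orthogonal, $\CC\nsubseteq\CC^\perp$, so $\CC*\CC=\mathcal{P}_0\oplus\mathcal{M}$ with $\mathcal{P}_0$ a projective cover of $K_G$. But in this setting $\mathcal{P}_0=KG$, so $\CC*\CC$ contains a $KG$-submodule isomorphic to the whole of $KG$.

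Next I would argue that a submodule of $KG$ isomorphic to $KG$ must be all of $KG$. One clean way: the isomorphic copy of $\mathcal{P}_0=KG$ inside $\CC*\CC\leq KG$ is in particular a $KG$-submodule $N\leq KG$ with $\dim_K N=|G|=\dim_K KG$, forcing $N=KG$ and hence $\CC*\CC=KG$. Alternatively, and with less appeal to dimension counting: the projection $\pi:\CC*\CC\to\mathcal{P}_0=KG$ constructed in the proof of Theorem \ref{thm:Schur} is a $KG$-epimorphism, so $KG$ is a quotient of $\CC*\CC\leq KG$; since $KG$ is a free (hence projective) module of rank $1$ over the local ring $KG$, any surjection $KG\twoheadrightarrow KG$ is an isomorphism (it must carry a generator to a unit multiple of a generator), and comparing $K$-dimensions of $\CC*\CC$ with that of its quotient $KG$ again gives $\dim_K\CC*\CC=|G|$, whence $\CC*\CC=KG$.

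I expect essentially no serious obstacle here: the whole content is packaged in Theorem \ref{thm:Schur} together with the two standard facts about $p$-modular group algebras of $p$-groups, namely that they are local and that the projective cover of $K_G$ is $KG$ itself (so $|G|_p=|G|$, matching the bound $|G|_p\leq\dim\CC*\CC$ from Theorem \ref{thm:Schur}). The only point that deserves a line of care is the passage from ``$\CC*\CC$ has a direct summand isomorphic to $KG$'' to ``$\CC*\CC=KG$'', which as noted above is immediate by a dimension count since $\CC*\CC\leq KG$ already. In fact this shows the slightly sharper statement that the direct summand $\mathcal{M}$ in the decomposition of Theorem \ref{thm:Schur} is zero in this case.
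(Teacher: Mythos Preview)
Your proposal is correct and follows essentially the same approach as the paper: both invoke Theorem~\ref{thm:Schur} and use that for a $p$-group in characteristic $p$ the projective cover of $K_G$ is all of $KG$ (equivalently, $|G|_p=|G|$), after which a dimension count forces $\CC*\CC=KG$. The paper simply cites the inequality $|G|_p\leq\dim\CC*\CC$ from Theorem~\ref{thm:Schur} in one line, which is exactly the second route you outline.
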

\begin{proof}
The claim follows immediately from the last part of Theorem \ref{thm:Schur}.
\end{proof}

Using the bound \eqref{eq:BoundDim} on the dimension of $\CC*\CC$, Proposition \ref{prop:selfP} yields the following corollary.

\begin{corollary}
Let ${\rm char}K=p$, $G$ be a $p$-group and $\CC\leq KG$ be a $G$-code.
If
\[\dim\CC<\frac{\sqrt{8|G|+1}-1}{2},\]
then $\CC$ is self-orthogonal.
\end{corollary}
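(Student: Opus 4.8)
The plan is to argue by contraposition, invoking Proposition \ref{prop:selfP} together with the bound \eqref{eq:BoundDim}. Assume $\CC$ is \emph{not} self-orthogonal. Since $\mathrm{char}\,K=p$ and $G$ is a $p$-group, Proposition \ref{prop:selfP} applies and gives $\CC*\CC=KG$, hence $\dim\CC*\CC=|G|$.

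Next I would feed this into \eqref{eq:BoundDim}, which for a code of length $n=|G|$ reads $\dim\CC*\CC\leq\min\{|G|,\binom{\dim\CC+1}{2}\}$. Combining with the previous step yields $|G|\leq\binom{\dim\CC+1}{2}=\tfrac{1}{2}\dim\CC(\dim\CC+1)$.

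It then remains to solve the resulting quadratic inequality: writing $k=\dim\CC$, the inequality $k^2+k-2|G|\geq0$ forces $k\geq\tfrac{1}{2}\bigl(\sqrt{8|G|+1}-1\bigr)$. Contrapositively, if $\dim\CC<\tfrac{1}{2}\bigl(\sqrt{8|G|+1}-1\bigr)$, then $\CC$ must be self-orthogonal, which is the claim.

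There is no real obstacle here: the statement is a direct numerical consequence of Proposition \ref{prop:selfP} and the elementary estimate \eqref{eq:BoundDim}, and the only computation involved is inverting the quadratic $k\mapsto\binom{k+1}{2}$. The only point worth a word of care is to make sure the hypothesis on $G$ being a $p$-group in characteristic $p$ is what licenses the use of Proposition \ref{prop:selfP}; with that in place the argument is immediate.
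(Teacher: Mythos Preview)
Your proof is correct and follows exactly the approach indicated in the paper: the corollary is stated as an immediate consequence of Proposition~\ref{prop:selfP} combined with the bound~\eqref{eq:BoundDim}, and your contrapositive argument with the quadratic inversion is precisely what is intended.
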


\section*{Acknowledgments}
The first author was partially supported by the ANR-21-CE39-0009 - BARRACUDA (French \emph{Agence Nationale de la Recherche}).
The third author was partially supported by the Italian National Group for Algebraic and Geometric Structures and their Applications (GNSAGA - INdAM).

\bibliographystyle{abbrv}
\bibliography{references.bib}

\end{document}